\documentclass[dvipdfmx]{amsart}

\makeatletter
	
	\@addtoreset{equation}{section}

	\@addtoreset{figure}{section}
\makeatother

\usepackage{amsthm}
\usepackage{amsmath}
\usepackage{amsfonts}
\usepackage{graphicx}
\usepackage{subcaption}
\usepackage{url}
\usepackage{mathtools}
\usepackage{caption}
\usepackage{appendix}
\title[]{On the Annihilating polynomial of the Colored Jones Polynomial for Some Links}
\author[]{Shun Sawabe}
\address{Department of Pure and Applied Mathematics, School of Fundamental Science and Engineering, Waseda University, 3-4-1 Okubo, Shinjuku, Tokyo 169-8555, Japan}
\email{ssawabe[at]aoni.waseda.jp}
\subjclass[2020]{57K14, 57K31, 57K32}
\keywords{potential function, the volume conjecture, the AJ conjecture.}
\date{}

\newtheorem{thm}{Theorem}[section]
\newtheorem{defi}[thm]{Definition}%[section]
\newtheorem{prop}[thm]{Proposition}%[section]
\newtheorem{lem}[thm]{Lemma}%[section]

\newtheorem{conj}[thm]{Conjecture}
\newtheorem{question}[thm]{Question}

\theoremstyle{definition}
\newtheorem{rem}[thm]{Remark}
\newtheorem{ex}[thm]{Example}

\newcommand{\iu}{\sqrt{-1}}

\newcommand{\A}{\mathcal{A}}
\newcommand{\U}{\mathcal{U}}
\newcommand{\hA}{\widehat{A}}
\newcommand{\hB}{\widehat{B}}
\newcommand{\Af}{\mathfrak{A}}
\newcommand{\cL}{\mathcal{L}}
\newcommand{\hI}{\widehat{I}}

\newcommand{\tQ}{\tilde{Q}}
\newcommand{\tE}{\tilde{E}}

\newcommand{\tc}{\tilde{c}}
\newcommand{\tdel}{\tilde{\delta}}

\DeclareMathOperator{\li}{Li_2}

\begin{document}
\begin{abstract}
	In this paper, we consider polynomials and ideals obtained from the colored Jones polynomial
	in both commutative and noncommutative cases.
	In the commutative case, this ideal contains polynomials that can be regarded as the link version of the $A$-polynomial;
	in the noncommutative case, it consists of annihilating polynomials of the colored Jones polynomial and
	can be regarded as the link version of the $A_q$-polynomial.
	Moreover, we formulate the link version of the AJ conjecture.
\end{abstract}
\maketitle
\section{Introduction} %
\subsection{$A_q$-polynomial and the AJ conjecture}
Since the relationship between the quantum group $\mathcal{U}_q(sl_2)$ and the Jones polynomial was established,
it has been well-known that we can systematically construct knot invariants from quantum groups and their representations.
The colored Jones polynomial, which is a generalization of the Jones polynomial, is one of such invariants.
Let $K$ be a knot, and let $V_K(n)$ be the $n$-th colored Jones polynomial for the knot $K$.
Here, we normalize the colored Jones polynomial so that $V_\bigcirc(n)=[n]$, where
\[
	[n] = \frac{\{n\}}{\{1\}},\quad \{n\}=q^{\frac{n}{2}}-q^{-\frac{n}{2}}
\]
for an integer $n$. Namely, $V_K(n) = [n]J_n(K;q)$.
For any knot $K$, the colored Jones polynomial $V_K(n)$ for the knot $K$ has a nontrivial recurrence relation
\begin{equation} \label{eq:Jrec}
	\sum^d_{j=0}c_j(q,q^n)V_K(n+j) = 0,
\end{equation}
where $c_j(q,q^n) \in \mathbb{Z}[q,q^n]$.
See \cite{GL} for details.
Defining the operators $E$ and $Q$ by
\[
	(EV_K)(n)=V_K(n+1),\quad (QV_K)(n)=q^nV_K(n),
\]
we can rewrite \eqref{eq:Jrec} as
\[
	\left(\sum^d_{j=0}c_j(q,Q)E^j \right)V_K(n) = 0.
\]
This yields a nontrivial annihilating polynomial of $V_K(n)$.
Note that the polynomial is in the noncommutative algebra $\A$ generated by $E$ and $Q$ with relation $EQ=qQE$.
Note also that $V_K(n)$ has infinitely many annihilating polynomials,
and they form an ideal in (the localization of) the algebra $\A$.
The $A_q$-polynomial $\hA(K)(E,Q)$ for a knot $K$
is defined as the generator of the ideal with the smallest $E$-degree in $\A$, and is of the form
\[
	\hA(K)(E,Q) = \sum _k a_k E^k,
\]
where $a_k \in \mathbb{Z}[q,Q]$ are coprime.
The AJ conjecture \cite{Ga} states that the $A_q$-polynomial is a $q$-analogue of the $A$-polynomial $A_K(l,\alpha)$.
First, we recall the definition of the $A$-polynomial.
Let $M$ be the exterior of the knot $K$,
let $R(M)$ be an affine algebraic variety consisting of all $SL(2,\mathbb{C})$-representations of the knot group $ \pi_1(M)$,
and let $X(M)$ be the $SL(2,\mathbb{C})$-character variety of $\pi_1(M)$.
There is a canonical projection $t:R(\partial M) \to X(\partial M)$.
Moreover, the inclusion map $\iota: \partial M \to M$ induces the map $r:X(M) \to X(\partial M)$.
Let $ \Delta $ be a subvariety of $R(\partial M)$ consisting of diagonal representations.
We also let $\lambda$ and $\mu$ be the preferred longitude and the meridian, respectively.
Since the diagonal representation $\varrho$ is determined by the $(1,1)$-entries of $\varrho(\lambda)$ and $\varrho(\mu)$,
we can identify $ \Delta $ with $ (\mathbb{C}^{\times})^2$.
Composing the restriction $t|_{\Delta}:\Delta \to X(\partial M)$ and the identification above,
we obtain the map $\theta : (\mathbb{C}^{\times})^2 \to X(\partial M)$.
\begin{defi}[\cite{CL}]
	Let $D_M$ be the union of $\theta^{-1}(\overline{r(X)})$ with $X$ running over all irreducible components in $X(M)$
	such that $\overline{r(X)}$ is $1$-dimensional.
	The closure of $D_M$ in $\mathbb{C}^2$ is an algebraic curve,
	and we call its $\mathbb{Z}$-coefficient defining polynomial the $A$-polynomial for the knot $K$.
\end{defi}
\begin{conj}[the AJ conjecture \cite{Ga}]
	For any knot $K$, the $A$-polynomial $A_K(l,\alpha)$ for $K$ is equal to $ \varepsilon \hA(K)(l,\alpha^2)$
	up to multiplication by an element in $ \mathbb{Q}(\alpha)$,
	where $ \varepsilon $ is an evaluation map at $q=1$.
\end{conj}
The AJ conjecture is true for large classes of knots such as twist knots \cite{Le}, torus knots \cite{Tr13},
and some classes of hyperbolic knots \cite{LZ}.

\subsection{A link version of the $A$-polynomial}
Let $\cL=\cL_1 \cup \cdots \cup \cL_k$ be a $k$-component link, and let $M$ be its exterior.
The boundary of $M$ is a union of $k$ tori
\[
	\partial M = T_1 \cup \cdots \cup T_k,
\]
where a torus $T_j$ follows the $j$-th component $\cL_j$.
Let $\lambda _j$ and $\mu_j$, with $j=1,\ldots,k$, be the preferred longitude and the meridian of the torus $T_j$.
We also let $\varrho: \pi_1(M) \to SL(2,\mathbb{C})$ be a representation of the link group.
A link version of the $A$-polynomial should similarly express the conditions for the eigenvalues $l_j$ and $\alpha_j$
of $\varrho(\lambda _j)$ and $\varrho(\mu _j)$, with $j=1,\ldots,k$,
that determine the $SL(2,\mathbb{C})$-representation of the link group, just as it does for knots.
In \cite{Mu}, a link version of the $A$-polynomial is calculated from a potential function (see \ref{sec:pf}) of the colored Alexander polynomial.
We can perform a similar calculation via the colored Jones polynomial and obtain the same polynomial.
As we will review later, we can obtain a family of deformations of the hyperbolic structure from saddle points
of the potential function $\Phi(\alpha_1,\ldots,\alpha_k,w_1,\ldots,w_\nu)$
of the $(n_1,\ldots,n_k)$-colored Jones polynomial for the $k$-component link $\cL=\cL_1 \cup \cdots \cup \cL_k$.
Here, the variables $\alpha_1,\ldots,\alpha_k$ derive from the colors $n_1,\ldots,n_k$.
If the equations
\begin{equation} \label{eq:delPhiintro}
	\begin{dcases}
		\exp \left(w_i \frac{\partial \Phi}{\partial w_i} \right)= 1, \quad (i = 1,\ldots,\nu)\\
		\exp \left(\alpha_j \frac{\partial \Phi}{\partial \alpha_j}\right) = l_j^2, \quad (j = 1,\ldots,k) .
	\end{dcases}
\end{equation}
have a common solution, the link complement admits a hyperbolic structure.
In other words, a saddle point provides a representation of the link group.
The equations \eqref{eq:delPhiintro} are equivalent to algebraic equations
\[
	\begin{dcases}
		f_i(\alpha_1,\ldots,\alpha_k,w_1,\ldots,w_\nu)=0, \quad (i = 1,\ldots,\nu)\\
		g_j(l_j,\alpha_1,\ldots,\alpha_k,w_1,\ldots,w_\nu)=0, \quad (j = 1,\ldots,k),
	\end{dcases}
\]
where the left-hand sides are $\mathbb{Z}$-coefficient polynomials in $w_i$'s, $ \alpha _j$'s and $l_j$'s.
Then, as a generator of the ideal
\[
	\langle f_1,\ldots,f_\nu,g_j \rangle \cap \mathbb{Q}(\alpha_1,\ldots,\alpha_k)[l_j] \subset \mathbb{Q}(\alpha_1,\ldots,\alpha_k)[l_j],
\]
we have a polynomial $A_j(\cL)(l_j,\alpha_1,\ldots,\alpha_k)$.
For hyperbolic knots, the polynomial obtained here corresponds to the factor of the $A$-polynomial associated with the hyperbolic structure.
In Section \ref{sec:links}, we therefore call the polynomial the hyperbolic $A$-polynomial for the $j$-th component. %name
The polynomials $A_j(\cL)$, with $j=1,\ldots,k$, belong to the ideal
\[
	I_{\cL}^\Phi = \langle f_1,\ldots,f_\nu, g_1,\ldots,g_k \rangle \cap \mathbb{Q}(\alpha_1,\ldots,\alpha_k)[l_1,\ldots,l_k]
\]
in the algebra $\mathbb{Q}(\alpha_1,\ldots,\alpha_k)[l_1,\ldots,l_k]$.
Even for hyperbolic knots, we cannot obtain the factor corresponding to the abelian representation by the above method.

Now let us formulate the ideal $I(\cL)$ in $\mathbb{Q}(\alpha_1,\ldots,\alpha_k)\allowbreak [l_1,\ldots,l_k]$ that expresses
the conditions for defining an arbitrary family of $SL(2,\mathbb{C})$-representations of $\pi_1(M)$ parametrized by $\alpha_1,\ldots, \alpha_k$,
regardless of whether it corresponds to hyperbolic structures or not.
Let $O$ be the set of all possible orientations of the link $\cL$.
We fix an orientation $\sigma \in O$ of the link $\cL$.
Suppose a presentation of the group $\pi_1(M)$ is given by
\begin{equation} \label{eq:grppre}
	\pi_1(M) = \langle x_1,\ldots,x_s \mid r_1,\ldots,r_t \rangle.
\end{equation}
For a representation $\varrho:\pi_1(M) \to SL(2,\mathbb{C})$, we put
\begin{equation} \label{eq:repx}
	\varrho(x_a) = \left(\begin{matrix}
	x_{11}^{(a)} & x_{12}^{(a)} \\
	x_{21}^{(a)} & x_{22}^{(a)}
	\end{matrix}\right),\quad (a=1,\ldots,s).
\end{equation}
Let $\iota_j$ be the inclusion map $\iota_j:T_j \to M$, with $j=1,\ldots,k$.
This induces the map $(\iota_j)_*:\pi_1(T_j) \to \pi_1(M)$. We put $ \varrho_j=\varrho \circ (\iota_j)_*$.
For each $j$, $\varrho_j(\lambda_j)$ commutes with $\varrho_j(\mu_j)$.
Therefore, there exists $g_j=(g^{(j)}_{\alpha \beta}) \in SL(2,\mathbb{C})$ such that
\begin{equation} \label{eq:simdiag}
	g_j^{-1}\varrho_j(\lambda_j)g_j = \left(\begin{matrix}
	l_j & 0 \\
	0 & l_j^{-1}
	\end{matrix}\right),\text{ and } g_j^{-1}\varrho_j(\mu_j)g_j = \left(\begin{matrix}
	\alpha_j & 0 \\
	0 & \alpha_j^{-1}
	\end{matrix}\right).
\end{equation}
We obtain a system of algebraic equations with coefficients in $\mathbb{Q}(\alpha_1,\ldots,\alpha_k)$
as conditions for \eqref{eq:repx} and \eqref{eq:simdiag} to be consistent with the group presentation \eqref{eq:grppre}.
Let $S$ be the set of polynomials that appear in those algebraic equations.
Then, we put
\[
	I^{\sigma}(\cL) = \langle S \rangle \cap \mathbb{Q}(\alpha_1,\ldots,\alpha_k)[l_1,\ldots,l_k].
\]
We define the ideal $I(\cL)$ as the intersection of the ideals $I^{\sigma}(\cL)$ over all $\sigma \in O$:
\[
	I(\cL) = \bigcap_{\sigma \in O}I^{\sigma}(\cL).
\]
See Definition \ref{def:Aideal} for details.
If $\cL$ is hyperbolic, a generator of $I(\cL) \cap \mathbb{Q}(\alpha_1,\ldots,\alpha_k)[l_j]$
would have both factors corresponding to the hyperbolic structure and ones not corresponding to it.
Thus the ideal $I(\cL)$ would be contained in the ideal obtained from only the factors corresponding to the hyperbolic structure.
Specifically, the ideal $I(\cL)$ would not coincide with $I_{\cL}^\Phi$.
\begin{rem}
For the Hopf link $H$ and the Whitehead link $W$,
we obtain a linear polynomial with respect to both $l_1$ and $l_2$
that is in the ideal $I(H)$ or $I_W^\Phi$.
We call such a polynomial a $B$-polynomial.
\end{rem}
%In the knot case, we can obtain the polynomial as a generator of the ideal
%because the algebra $\mathbb{Q}(\alpha_1)[l_1]$ is a principal ideal domain.
%In the link case, however, the algebra $\mathbb{Q}(\alpha_1,\ldots,\alpha_k)[l_1,\ldots,l_k]$
%is not a principal ideal domain.
%Thus, considering the ideal rather than polynomials is a reasonable approach. %I(L)

\subsection{A link version of the $A_q$-polynomial}
We also define a link version of the $A_q$-polynomial.
In the case of links, for simplicity of calculation, we use the colored Jones polynomial 
$V_{\cL}(n_1,\ldots,n_k) = V_{(n_1,\ldots,n_k)}(\cL;s)$ normalized as
\[
	V_{(n_1,\ldots,n_k)}(\bigcirc^k;s) = [n_1]_s \cdots[n_k]_s,
\]
where $s=q^{\frac{1}{2}}$, $\bigcirc^k$ is a trivial link with $k$ components, and
\[
	\{n\}_s=s^n-s^{-n}, \text{ and } [n]_s = \frac{\{n\}_s}{\{1\}_s}
\]
for an integer $n$.
We also define the quantum factorial $\{n\}_s !$ as usual.
Let $Q_i$ and $E_j$ be operators defined by
\begin{align*}
	(Q_i V_{\cL})(n_1,\ldots,n_k) &= s^{n_i}V_{\cL}(n_1,\ldots,n_k), \\
	(E_j V_{\cL})(n_1,\ldots,n_k) &= V_{\cL}(n_1,\ldots,n_j+1,\ldots,n_k).
\end{align*}
For each $j$, $Q_1,\ldots, Q_k$, and $E_j$ generate the algebra $\A^{j;k}$ consisting of $\mathbb{Z}[s,Q_1,\ldots,\allowbreak Q_k]$-coefficient polynomials
in $E_j$ with relations
\[
	E_j Q_j = sQ_j E_j  \text{ and } E_jQ_i = Q_i E_j, \text{ with } i \neq j.
\]
The $\hA$-polynomial $\hA_j(\cL)$ for $j$-th component of the link $\cL$ is defined as
a $\mathbb{Z}[s,Q_1,\ldots,\allowbreak Q_k]$-coefficient polynomial that generates the ideal of annihilating polynomials
of $V_{\cL}(n_1,\ldots,n_k)$ with respect to $n_j$
in a certain localization $\A^{j;k}_{\mathrm{loc}}$ of the algebra $\A^{j;k}$.
Then, there would be a similar relationship to the AJ conjecture
between the hyperbolic $A$-polynomial and the $\hA$-polynomial. %hyp
More precisely, there would be a relationship, analogous to the AJ conjecture, between the ideal $I(\cL)$
and the ideal consisting of the annihilating polynomials of the colored Jones polynomial.
First, let us formulate the ideal consisting of the annihilating polynomials.
Let $\A_k$ be the algebra of noncommutative polynomials of the form
\[
	\sum _{j_1,\ldots,j_k}c_{j_1,\ldots,j_k}(s,Q_1,\ldots,Q_k)E_1^{j_1} \cdots E_k^{j_k},
\]
where $c_{j_1,\ldots,j_k}(s,Q_1,\ldots,Q_k) \in \mathbb{Q}(s,Q_1,\ldots,Q_k)$, with relations
\[
	E_i^l p(s,Q_1,\ldots,Q_k) = p(s,Q_1,\ldots,s^lQ_i,\ldots,Q_k) E_i^l \text{ and } E_iE_j = E_jE_i,
\]
where $p(s,Q_1,\ldots,Q_k) \in \mathbb{Q}(s,Q_1,\ldots,Q_k)$.
For the link $\cL$, the annihilating polynomials $\hA_j(\cL)$, with $j=1,\ldots,k$, are in the left ideal
\[
	\hI(\cL) = \{P\in \A_k \mid PV_{\cL}(n_1,\ldots,n_k) = 0\}
\]
of the algebra $\A_k$.
Let us call the ideal $\hI(\cL)$ the $\hA$-ideal of the link $\cL$.
Then, we conjecture the following relationship between the ideal $I(\cL)$ and the ideal $\hI(\cL)$.
\newtheorem*{uconj}{\textrm \textbf Conjecture~\ref{conj:linkAJ}}
\begin{uconj}
	For any link $\cL$, $ \varepsilon _s \hI(\cL) = I(\cL) $ holds.
\end{uconj}
Here, $ \varepsilon _s$ is the evaluation map at $s=1$.
In section \ref{sec:cal}, we find the $\hA$-ideal of the Hopf link $H$ and the Whitehead link $W$.
In fact, the $\hA$-polynomials are not enough as generators of the $\hA$-ideal in both cases of the Hopf link and the Whitehead link.
In both cases, there is a polynomial in $E_1$ and $E_2$ whose degree with respect to each variable is $1$.
We call this polynomial the $\hB$-polynomial.
We see that the $\hB$-polynomial and one of the $\hA$-polynomials generate the ideal $\hI(L)$, where $L=H$ or $W$.
See Theorem \ref{thm:genhIH} for the Hopf link, and Theorem \ref{thm:genhIW} for the Whitehead link.
In particular, for the Hopf link, we compute these polynomials directly.
Furthermore, we verify the following theorem:
\newtheorem*{uthm}{\textrm \textbf Theorem~\ref{thm:cjforH}}
\begin{uthm}
	$\varepsilon _s \hI(H) = I(H)$ holds.
\end{uthm}
For the Whitehead link, we compute annihilating polynomials by a creative telescoping method \cite{WZ}.
The colored Jones polynomial for the Whitehead link is
\begin{equation} \label{eq:VWFintro}
	V_W(m,n) = \sum _{i=0}^{\min(m,n)-1} F(m,n;i),
\end{equation}
where
\[
	F(m,n;i)=(-1)^{m+n}s^{-\frac{i^2+3i}{2}}\frac{\{m+i\}_s!\{n+i\}_s!\{i\}_s!}{\{1\}_s\{m-i-1\}_s!\{n-i-1\}_s!\{2i+1\}_s!}.
\]
Here, $\{n\}_s=s^n-s^{-n}$, and $\{n\}_s! = \{n\}_s \{n-1\}_s \cdots \{1\}_s$.
Let $E_1$, $Q_1$, $Q_2$, $\tE_1$ and $\tQ_1$ be operators defined by
\begin{align*}
	(E_1F)(m,n;i)&=F(m+1,n;i),\\
	(Q_1F)(m,n;i)&=s^mF(m,n;i),\\
	(E_2F)(m,n;i)&=F(m,n+1;i),\\
	(Q_2F)(m,n;i)&=s^nF(m,n;i),\\
	(\tE_1F)(m,n;i)&=F(m,n;i+1),\\
	(\tQ_1F)(m,n;i)&=s^iF(m,n;i).
\end{align*}
We can obtain an annihilating polynomial of the summand $F$ that does not have $\tQ_1$ as a variable from the equations
\[
	S_jE_j-R_j=0,\text{ with }j=1,2, \text{ and } \tilde{S}_1 \tE_1- \tilde{R}_1=0,
\]
where $R_j=R_j(s,Q_j,\tQ_1)$, $S_j=S_j(s,Q_j,\tQ_1)$, $\tilde{R}_1=\tilde{R}_1(s,Q_1,Q_2,\tQ_1)$ and $\tilde{S}_1=\tilde{S}_1(s,Q_1,Q_2,\tQ_1)$
are $\mathbb{Z}$-coefficient polynomials
\[
	\left.\frac{E_jF}{F}\right|_{\substack{(s^m,s^n,s^i) \\ =(Q_1,Q_2,\tQ_1)}}= \frac{R_j(s,Q_j,\tQ_1)}{S_j(s,Q_j,\tQ_1)}
\]
and
\[
	\left.\frac{\tE_1F}{F}\right|_{\substack{(s^m,s^n,s^i) \\ =(Q_1,Q_2,\tQ_1)}}= \frac{\tilde{R}_1(s,Q_1,Q_2,\tQ_1)}{\tilde{S}_1(s,Q_1,Q_2,\tQ_1)}.
\]
The creative telescoping is the method that gives an annihilating polynomial of the sum of the summand $F$
from the annihilating polynomial of the summand $F$.
Specifically, the equations
\[
	S_1E_1-R_1=0, \text{ and } \tilde{S}_1 \tE_1- \tilde{R}_1=0
\]
yield an annihilating polynomial $\Af_1(W)(s,E_1,Q_1,Q_2)$, and the equations
\[
	S_1E_1-R_1=0, \text{ and } S_2E_2-R_2=0
\]
yield the $\hB$-polynomial.
Here, the annihilating polynomial $\Af_1(W)(s,E_1,Q_1,Q_2)$ is $\mathbb{Q}(s,Q_1,Q_2)$-coefficient.
Generally, it is not certain whether any summand always yields an annihilating polynomial with the smallest degree.
In fact, we can factorize $\Af_1(W)$, but the right divisor in this factorization does not annihilate $V_W(m,n)$.
Thus, in this case, we obtain the smallest degree one.
Clearing the denominators of the coefficients of $\Af_1(W)$ yields the $\A$-polynomial $\hA_1(W)$.
Furthermore, we see that this polynomial $\Af_1(W)$ and the $\hB$-polynomial generate the $\hA$-ideal $\hI(W)$.
Additionally, we evaluate the $\hA$-ideal at $s=1$.
We see that the annihilating polynomial $\Af_1(W)(s,E_1,Q_1,Q_2)$ satisfies
\begin{align}
\begin{split} \label{eq:evsWintro}
	\varepsilon _s \Af_{1}&(W)(s,E_1,Q_1,Q_2)\\
	&= \frac{1-Q_1^2}{(1+Q_1^2)^2Q_1^2Q_2^2}(E_1+1)(E_1-Q_1^2)A_1(W)(E_1,Q_1^2,Q_2^2),
\end{split}
\end{align}
$A_1(W)(E_1,Q_1,Q_2)$ is the hyperbolic $A$-polynomial for the Whitehead link given in \cite{Mu} and Section \ref{sec:links}.
Similarly, evaluating the $\hB$-polynomial at $s=1$ yields the $B$-polynomial.
Therefore $ \varepsilon _s \hI(W) \subset I_W^\Phi $ holds,
where $\Phi$ is a potential function obtained from the summand $F$ in \eqref{eq:VWFintro}.
If the factors $E_1+1$ and $E_1-Q_1^2$ in \eqref{eq:evsWintro} correspond to some $SL(2,\mathbb{C})$-representations of the link group,
we can expect that $ \varepsilon _s \hI(W) = I(W) $ holds. \par
This paper is organized as follows:
In Section \ref{sec:prel}, we review the colored Jones polynomial and its potential function.
We also review the creative telescoping method, which is used to calculate the annihilating polynomial.
In Section \ref{sec:links}, we consider polynomials and ideals obtained from the colored Jones polynomial in both commutative and noncommutative cases.
We obtain the link version of the $A$-polynomial in the commutative case, and the link version of the $A_q$-polynomial in the noncommutative case.
The ideals $I(\cL)$ and $\hI(\cL)$ are also given in this section.
In Section \ref{sec:cal}, we calculate those annihilating polynomials for the Hopf link $H$ and the Whitehead link $W$.
Furthermore, by examining the relationship between these polynomials, we confirm that they generate the $\hA$-ideal $\hI(L)$,
where $L=H$ or $W$.
Additionally, we verify that $ \varepsilon _s \hI(W) \subset I_W^\Phi$ for the Whitehead link $W$
and formulate the conjecture that a similar relationship holds for general hyperbolic links.
In the Appendix, we find another relationship between $\Af_{1}(W)$, $\Af_{2}(W)$ and $\hB_{12}(W)$.
\par
\noindent \textit{Acknowledgments.} The author is grateful to Jun Murakami for his helpful comments.
\section{Preliminaries} \label{sec:prel}
\subsection{Colored Jones polynomial and the potential function} \label{sec:pf}
The colored Jones polynomial is derived from the following quantum group $\U_r,\ (r \in \mathbb{Z}_{>1})$ and its representation \cite{KM}:
Let $\U_r$ be an algebra generated by $X,\ Y,\ K,\ \overline{K}$ with the relations
\begin{align*}
	\overline{K}=K^{-1},\quad KX=sXK,\quad &KY=s^{-1}YK,\quad XY-YX=\frac{K^2-\overline{K}^2}{s-s^{-1}},\\
	&X^r=Y^r=0,\quad K^{4r}=1,
\end{align*}
where $s=e^{\frac{\pi \iu}{r}}$.
Let $V$ be an $n$-dimensional complex vector space, and let $\{e_{m},e_{m-1},\ldots,e_{-m}\}$ be a basis of $V$,
where $m$ is a half-integer satisfying $n=2m+1$.
We define the action of the algebra $\U_r$ on the vector space $V$ by
\begin{align*}
	Xe_i &= [m+i+1]_s e_{i+1}, \\
	Ye_i &= [m-i+1]_s e_{i-1}, \\
	Ke_i &= s^{i}e_i,
\end{align*}
where
\[
	[k]_s=\frac{s^k-s^{-k}}{s-s^{-1}}
\]
for an integer $k$.
The volume conjecture states that the colored Jones polynomial is related to the geometry of the knot complement.
\begin{conj}[Volume Conjecture \cite{MM}]
	For any knot $K$, the colored Jones polynomial $J_N(K;q)$ satisfies
	\[
		2 \pi \lim _{N \to \infty} \frac{\log |J_N(K;q=e^{\frac{2 \pi \iu}{N}})|}{N} = v_3 ||K||,
	\]
	where $v_3$ is the volume of the ideal regular tetrahedron in the three-dimensional hyperbolic space
	and $|| \cdot ||$ is the simplicial volume for the complement of $K$.
\end{conj}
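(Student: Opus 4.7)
The plan is to follow the potential-function approach pioneered by Kashaev and Murakami--Murakami. First, I would rewrite $J_N(K; e^{2\pi \iu / N})$ as a finite state sum whose summand is a product of quantum dilogarithms evaluated at $N$-th roots of unity. Using the classical asymptotic expansion of the quantum dilogarithm, each term should behave like
\[
	\exp\!\Bigl(\tfrac{N}{2\pi \iu}\,\Phi_K(\mathbf{z}) + O(\log N)\Bigr),
\]
where $\Phi_K$ is the potential function attached to a diagram (or ideal triangulation) of $K$, built out of the classical dilogarithm $\operatorname{Li}_2$.

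Second, I would approximate the discrete state sum by a contour integral in the variables $\mathbf{z}$ and apply the method of steepest descent. The critical-point equations $\partial \Phi_K/\partial z_i = 0$ should coincide with Thurston's hyperbolic gluing and completeness equations for an ideal triangulation of $S^3 \setminus K$, so that the complete hyperbolic structure provides a distinguished saddle point $\mathbf{z}^{*}$.

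Third, the steepest-descent contribution at $\mathbf{z}^{*}$ should yield, to leading order,
\[
	\log |J_N(K; e^{2\pi \iu / N})| = \frac{N}{2\pi}\,\vol(S^3 \setminus K) + O(\log N),
\]
after identifying the imaginary part of $\Phi_K(\mathbf{z}^{*})$ with the hyperbolic volume via Neumann--Zagier's formula for sums of $\operatorname{Li}_2$ evaluated at shape parameters. The conjectured limit then follows from Gromov--Thurston's identity $\vol(S^3 \setminus K) = v_3 \|K\|$ for a hyperbolic knot.

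The main obstacle is the saddle point analysis itself: the state sum is a sum of highly oscillatory complex exponentials, and one must justify deforming the sum (or its integral approximation) through $\mathbf{z}^{*}$ while ruling out dominant contributions from competing critical points or from the boundary of the domain. A secondary obstruction is the non-hyperbolic case (torus knots, satellites, etc.), where $\|K\|$ decomposes along the JSJ pieces and the potential-function analysis must be carried out piece by piece. These are precisely the reasons the conjecture remains open in full generality, although it has been verified for the Hopf link, the Whitehead link, and various other families by methods close to the one sketched here.
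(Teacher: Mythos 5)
This statement is the Volume Conjecture, quoted from \cite{MM}; it is an open conjecture, and the paper offers no proof of it --- it only uses the conjecture as motivation for the potential-function formalism of Section \ref{sec:pf}. So there is no ``paper's own proof'' to compare against, and your proposal cannot be judged correct: it is an outline of the standard Kashaev--Yokota--Murakami strategy, not a proof. The genuine gap is the one you yourself flag in your final paragraph, and it is not a minor technicality: the passage from the finite state sum at roots of unity to a contour integral, and the subsequent deformation of that contour through the geometric saddle point while controlling all other critical points and boundary contributions, has never been justified in general. Without it, steps two and three of your plan are assertions, not deductions. The identification of the critical-point equations with the gluing equations and of $\mathrm{Im}\,\Phi_K(\mathbf{z}^*)$ with the volume are indeed established in the literature (and in \cite{Sa}, \cite{Yo}), but those are statements about the potential function, not about the asymptotics of $J_N$ itself.

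A secondary issue: your sketch only addresses hyperbolic knots. For non-hyperbolic $K$ the right-hand side $v_3\|K\|$ is governed by the JSJ decomposition, and the potential-function machinery does not directly apply; a complete proof would need a separate argument there (the torus-knot case is known, but by entirely different methods). If you want to contribute something provable in the spirit of this paper, the honest target is not the conjecture itself but the supporting structural statements --- e.g.\ that the system \eqref{eq:phihyp} recovers the gluing equations and the $A$-polynomial, or the operator-level correspondences of Proposition \ref{prop:comp} and its link analogue in Section \ref{sec:links} --- which is precisely what the paper does.
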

Here, we normalize the colored Jones polynomial so that $J_N(\bigcirc;q)=1$ for the unknot $ \bigcirc $.
One idea to prove the conjecture is to use the saddle point method (see \cite{Yo}).
We assume that the colored Jones polynomial $J_N(K;q=e^{\frac{2 \pi \iu}{N}})$ is approximated as
\[
	J_N(K;q=e^{\frac{2 \pi \iu}{N}}) \sim \int _\Omega P_N e^{\frac{N}{2 \pi \iu}\Phi(w_1,\ldots,w_\nu)}dw_1 \cdots dw_\nu
\]
for a sufficiently large integer $N$.
Here, $ \Omega \subset \mathbb{C}^\nu $ is a region, and $P_N$ grows at most polynomially.
Then, the value at the saddle point of the function $\Phi(w_1,\ldots,w_\nu)$ contributes to the limit.
We call the function $\Phi$ a potential function.
In \cite{Sa}, we give the potential function $\Phi_D(\boldsymbol{a},w_1,\ldots,w_\nu)$ of $J_{\boldsymbol{n}}(\cL;q=e^{\frac{2 \pi \iu}{N}})$,
where $\cL$ is a $k$-component link with a diagram $D$ and $\boldsymbol{n}=(n_1,\ldots,n_k)$ is a tuple of colors. % a <--> i
Here, $ \boldsymbol{a} = (a_1,\ldots,a_k)$ is a $k$-tuple of parameters, where
\[
	a_j = \lim _{N \to \infty} \frac{n_j}{N},\quad (j=1,\ldots, k).
\]
The colored Jones polynomials can be formulated using $R$-matrices,
and the potential function $\Phi_D$ is constructed by approximating the coefficients of the $R$-matrix
assigned to each crossing of the diagram $D$ with continuous functions.
We proved that the system of equations
\[
	\exp \left(w_{i}\frac{\partial \Phi _D}{\partial w_{i}}\right) = 1,\quad (i=1,\ldots,\nu)
\]
coincides with the gluing equation and that a saddle point
$(\sigma_1(\boldsymbol{a}),\ldots,\sigma_\nu(\boldsymbol{a}))$ gives the link complement a hyperbolic structure that is not necessarily complete.
We also proved that
\[
	\exp \left(\alpha_j \frac{\partial \Phi _D}{\partial \alpha_j}(\boldsymbol{\alpha},\sigma_1(\boldsymbol{\alpha}),\ldots,\sigma_\nu(\boldsymbol{\alpha}))\right)=l_j(\boldsymbol{\alpha})
\]
where $\alpha _j = e^{\pi \iu a_j}$, $\boldsymbol{\alpha} = (\alpha_1,\ldots,\alpha_k)$, and
$l_j(\boldsymbol{\alpha})$ is the dilation component of the $j$-th longitude.
In the knot case, the system of equations
\begin{equation} \label{eq:phihyp}
\begin{dcases}
	\exp \left(w_{i}\frac{\partial \Phi _D}{\partial w_{i}}\right) &= 1,\quad (i=1,\ldots,\nu) \\
	\exp \left(\alpha \frac{\partial \Phi _D}{\partial \alpha}\right) &= l^2,
\end{dcases}
\end{equation}
where $\Phi _D(\alpha,w_1,\ldots,w_\nu)$ is a potential function of the colored Jones polynomial $J_i(K;q=e^{\frac{2 \pi \iu}{N}})$ for a knot $K$,
and $l$ denotes the eigenvalue of the action of the longitude,
is a necessary condition where the knot complement admits a hyperbolic structure that is not necessarily complete.
Therefore, eliminating the variable $w_i$ from the above system of equations
would yield the factor of the $A$-polynomial $A_K(l,\alpha)$ for $K$ corresponding to nonabelian representations (see \cite{Gu,Hi,Y2}).

\subsection{Creative telescoping} \label{sec:cretele}
We can obtain an annihilating polynomial of $V_K(n)$ by creative telescoping \cite{WZ} (see also \cite{Ga,GL}). %%cite more?
Let $K$ be a knot. We put
\[
	V_K(n)= \sum _{k_1,\ldots,k_\nu} F(n,k_1,\ldots,k_\nu).
\]
Let $E$, $Q$, $E_i$ and $Q_i$, with $i=1,\ldots,\nu $, be operators defined by
\begin{align*}
	(EF)(n,k_1,\ldots,k_\nu) &= F(n+1,k_1,\ldots,k_\nu),\\
	(QF)(n,k_1,\ldots,k_\nu) &= q^n F(n,k_1,\ldots,k_\nu),\\
	(\tE_i F)(n,k_1,\ldots,k_\nu) &= F(n,k_1,\ldots,k_i+1,\ldots,k_\nu),\\
	(\tQ_i F)(n,k_1,\ldots,k_\nu) &= q^{k_i}F(n,k_1,\ldots,k_\nu).
\end{align*}
Suppose that we find an annihilating polynomial $P(E,Q,\tE_1,\ldots,\tE_\nu)$ of $F$.
Expanding the polynomial $P$ around $\tE_i=1$, with $i=1,\ldots,\nu$, we have
\[
	P_0(E,Q)+\sum^\nu_{i=1}(E_i-1)R_i(E,Q,E_1,\ldots,E_\nu),
\]
where $P_0(E,Q)=P(E,Q,1,\ldots,1)$, and $R_i$, with $i=1,\ldots,\nu$, is a $\mathbb{Q}[q,Q]$-coefficient polynomial
in $E, E_1,\ldots,E_\nu$.
Putting $G_i = R_iF$, we have
\[
	P_0(E,Q)F(n,k_1,\ldots,k_\nu) + \sum^\nu_{i=1}(G_i(n,k_1,\ldots,k_i+1,\ldots,k_\nu)-G_i(n,k_1,\ldots,k_\nu))=0.
\]
Then, $P_0(E,Q)V_K(n)$ is a sum of some multisums.
Here, the number of variables of those multisums is one less than that of $V_K(n)$.
Repeating this process yields an annihilating polynomial of $V_K(n)$.
The remaining problem is to obtain the polynomial $P(E,Q,\tE_1,\ldots,\tE_\nu)$.
Since $F(n,k_1,\ldots,k_\nu)$ is a proper $q$-hypergeometric function (see \cite{WZ}), %\cite{Ga,GL}
$\frac{EF}{F}$ and $\frac{\tE_iF}{F}$,with $i=1,\ldots,\nu $, are of the form
\begin{align*}
	\frac{EF}{F} &= \frac{R(q,q^n,q^{k_1},\ldots,q^{k_\nu})}{S(q,q^n,q^{k_1},\ldots,q^{k_\nu})},\\
	\frac{\tE_iF}{F} &= \frac{R_i(q,q^n,q^{k_1},\ldots,q^{k_\nu})}{S_i(q,q^n,q^{k_1},\ldots,q^{k_\nu})},
\end{align*}
where
\begin{align*}
	& R(q,q^n,q^{k_1},\ldots,q^{k_\nu}), S(q,q^n,q^{k_1},\ldots,q^{k_\nu}), R_i(q,q^n,q^{k_1},\ldots,q^{k_\nu}),\\
 	& \ \text{ and } S_i(q,q^n,q^{k_1},\ldots,q^{k_\nu}) \in \mathbb{Z}[q,q^n,q^{k_1},\ldots,q^{k_\nu}].
\end{align*}%$\mathbb{Z}$-coefficient polynomials in $q,q^n,q^{k_1},\ldots,q^{k_\nu}$.
This yields annihilating polynomials
\begin{align*}
	R(q,Q,\tQ_1,\ldots,\tQ_\nu)E &- S(q,Q,\tQ_1,\ldots,\tQ_\nu),\\
	R_i(q,Q,\tQ_1,\ldots,\tQ_\nu)\tE_i&- S_i(q,Q,\tQ_1,\ldots,\tQ_\nu),
\end{align*}
of $F(n,k_1,\ldots,k_\nu)$.
These polynomials generate the annihilating ideal $\mathrm{Ann}(F)$ of the summand $F$.
Therefore, if we can eliminate $\tQ_1,\ldots,\tQ_\nu$ from
\begin{align*}
	R(q,Q,\tQ_1,\ldots,\tQ_\nu)E - S(q,Q,\tQ_1,\ldots,\tQ_\nu) &= 0,\\
	R_i(q,Q,\tQ_1,\ldots,\tQ_\nu)\tE_i - S_i(q,Q,\tQ_1,\ldots,\tQ_\nu) &= 0,
\end{align*}
we obtain the annihilating polynomial of the summand $F$
with variables $E,Q,\tE_1,\ldots,\allowbreak \tE_\nu$.

\section{Polynomials and ideals derived from colored Jones polynomial} \label{sec:links}
In this section, we consider the link version of the $A$-polynomial and the $A_q$-polynomial.
As we mentioned in Introduction,
we use the colored Jones polynomial 
$V_{\cL}(n_1,\ldots,n_k) = V_{(n_1,\ldots,n_k)}(\cL;s)$ normalized as
\[
	V_{(n_1,\ldots,n_k)}(\bigcirc^k;s) = [n_1]_s \cdots[n_k]_s,
\]
for a $k$-component link $\cL$, where $s=q^{\frac{1}{2}}$.
\subsection{Commutative case} \label{ssec:linkscom}
Let $\cL=\cL_1 \cup \cdots \cup \cL_k$ be a $k$-component link,
and let $M$ be its exterior.
The boundary of $M$ is
\[
	\partial M = T_1 \cup \cdots \cup T_k,
\]
where $T_j$, with $j=1,\ldots,k$, is a torus that follows the $j$-th component $\cL_j$.
Let $\lambda _j$ and $\mu_j$, with $j=1,\ldots,k$, be the longitude and the meridian of the torus $T_j$.
We also let $\varrho: \pi_1(M) \to SL(2,\mathbb{C})$ be a representation of the link group.
Restricting the representation $\varrho$ to the torus $T_j$ yields an $SL(2,\mathbb{C})$-representation of $\pi_1(T_j)$.
Namely, the inclusion map $\iota_j:T_j \to M$ induces the map $(\iota_j)_*:\pi_1(T_j) \to \pi_1(M)$,
and we obtain a representation $\varrho_j:\pi_1(T_j) \to SL(2,\mathbb{C})$ by $\varrho_j = \varrho \circ (\iota_j)_*$.
As we already mentioned, the link version of the $A$-polynomial should express the conditions for the eigenvalues $l_j$'s and $ \alpha _j's$
of $\varrho_j(\lambda _j)$ and $\varrho_j(\mu _j)$.
For a $k$-component link, the $k$-tuple of $A$-polynomial was defined in \cite{Zh} and calculated for some links in \cite{Tr}.
Another kind of the $A$-polynomial associated with each component of a link is also investigated for the Whitehead link and the Borromean rings in \cite{Mu}.
The $A$-polynomial here is obtained from the colored Alexander polynomial by elimination.
To avoid confusion, we call the latter one the hyperbolic $A$-polynomial here. %hyp
We can apply a similar argument to the colored Jones polynomial.
Suppose that $\cL=\cL_1 \cup \cdots \cup \cL_k$ is a hyperbolic link,
and let $\Phi(\alpha _1,\ldots, \alpha _k,w_1,\ldots,w_\nu)$ be the potential function of the colored Jones polynomial for $\cL$.
We can obtain a family of hyperbolic structures of $S^3 \setminus \cL$ with $\alpha_1,\ldots,\alpha_k$ as deformation parameters
by a solution of% the system of equations
\begin{equation} \label{eq:delPhilink}
	\begin{dcases}
		\exp \left(w_i \frac{\partial \Phi}{\partial w_i} \right)= 1, \quad (i = 1,\ldots,\nu)\\
		\exp \left(\alpha_j \frac{\partial \Phi}{\partial \alpha_j}\right) = l_j^2, \quad (j = 1,\ldots,k).
	\end{dcases}
\end{equation}
The left-hand sides of \eqref{eq:delPhilink} are rational functions of $w_i$'s and $\alpha_j$'s.
Therefore, clearing the denominators yields the system of algebraic equations
\[%begin{equation} \label{eq:delPhilink}
	\begin{dcases}
		f_i(\alpha_1,\ldots,\alpha_k,w_1,\ldots,w_\nu)=0, \quad (i = 1,\ldots,\nu)\\
		g_j(l_j,\alpha_1,\ldots,\alpha_k,w_1,\ldots,w_\nu)=0, \quad (j = 1,\ldots,k),
	\end{dcases}
\]%end{equation}
where the left-hand sides are $\mathbb{Z}$-coefficient polynomials in $w_i$'s, $\alpha_j$'s and $l_j$'s.
Since the algebra $\mathbb{Q}(\alpha_1,\ldots, \allowbreak \alpha_k)[l_j]$ is a principal ideal domain as a ring for each $j$,
we can obtain the hyperbolic $A$-polynomial
\[
	A_j(\cL)(l_j,\alpha _1,\ldots,\alpha _k) \in \mathbb{Z}[\alpha _1,\ldots,\alpha _k,l_j]
\]
as a generator of the elimination ideal
\[
	\langle f_1,\ldots,f_\nu,g_j \rangle \cap \mathbb{Q}(\alpha_1,\ldots,\alpha_k)[l_j] \subset \mathbb{Q}(\alpha_1,\ldots,\alpha_k)[l_j]
\]
with coprime integer coefficients
up to multiplication by $\pm 1$. %a unit in $\mathbb{Q}(\alpha_1,\ldots, \allowbreak \alpha_k)[l_j]$.
The hyperbolic $A$-polynomials belong to the ideal %hyp
\[
	I_{\cL}^\Phi = \langle f_1,\ldots,f_\nu,g_1,\ldots,g_k \rangle \cap \mathbb{Q}(\alpha_1,\ldots,\alpha_k)[l_1,\ldots,l_k]
\]
of the algebra $\mathbb{Q}(\alpha_1,\ldots,\alpha_k)[l_1,\ldots,l_k]$.
A formula of the colored Jones polynomial for some knots and links,
such as the figure-eight knot and the Whitehead link, has only one summation index.
Moreover, we have a linear equation with respect to the variable
corresponding to the summation index as the saddle point equation.
In such a case, we can eliminate the variable by substitution.
\begin{ex}
The colored Jones polynomial $V_W(m,n)$ for the Whitehead link \cite{Ha} under the normalization of $V_W(m,n)=[m]_s[n]_s$ is
	\[
		V_W(m,n) = \sum _{i=0}^{\min(m,n)-1} F(m,n;i),
	\]
	where
	\[
		F(m,n;i)=(-1)^{m+n}s^{-\frac{i^2+3i}{2}}\frac{\{m+i\}_s!\{n+i\}_s!\{i\}_s!}{\{1\}_s\{m-i-1\}_s!\{n-i-1\}_s!\{2i+1\}_s!}.
	\]
The summand $F(m,n;i)|_{s=q^{\frac{1}{2}}}$ equals
\[
	(-1)^{m+n+i+1}q^{-mi-ni+\frac{i^2}{2}-\frac{m+n+i}{2}}\frac{(q)_{m+i}(q)_{n+i}(q)_{i}}{(1-q)(q)_{m-i-1}(q)_{n-i-1}(q)_{2i+1}},
\]
where $(q)_k=(1-q) \cdots (1-q^k)$ for an integer $k$ since
\[
	\{k\}! = (-1)^k q^{-\frac{1}{4}k(k+1)}(q)_k.
\]
For an integer $n_1,\ n_2$, and $N$ we have
\[
	e^{\frac{2 \pi \iu}{N}n_1n_2} = e^{\frac{N}{2 \pi \iu}\left(\frac{2 \pi \iu}{N}n_1\right)\left(\frac{2 \pi \iu}{N}n_2\right)}, \text{ and }
	(-1)^{n_1} = e^{\frac{N}{2 \pi \iu}\left(\pi \iu \frac{2 \pi \iu}{N}n_1\right)}.
\]
Moreover the direct calculation shows
\[
	\log (\xi_N)_{k} = \frac{N}{2 \pi \iu}\left( -\li(\xi^{k}_N)+\frac{\pi^2}{6}+o(1)\right),
\]
where $\xi_N = e^{\frac{2 \pi \iu}{N}}$, and
\[
	\li(z) = -\int^z_0 \frac{\log(1-x)}{x}dx.
\]
Therefore, we replace
\begin{itemize}
	\item $(-1)^{m+n+i}$ to $ \pi \iu (\log a + \log b + \log z)$,
	\item $-mi-ni+\frac{i^2}{2}$ to $ - \log a \log z - \log b \log z + \frac{1}{2}(\log z)^2$, and
	\item $(q)_{m+i}$ to $ - \li(a z) + \frac{\pi^2}{6}$.
\end{itemize}
We also replace other $q$-Pochhammer symbols in the same manner.
Then, we have a potential function
\begin{align*}
	\Phi(a,b,z) &= \pi \iu (\log a + \log b + \log z) - \log a \log z - \log b \log z + \frac{1}{2}(\log z)^2 \\
	 &- \li(a z)  - \li(b z) - \li(z) + \li (a z^{-1}) + \li(b z^{-1}) + \li(z^2).
\end{align*}
of $V_W(m,n)$. The equations
\[
	\exp \left(a \frac{\partial \Phi}{\partial a} \right) = l_1,\quad \exp \left(b \frac{\partial \Phi}{\partial b} \right) = l_2, \text{ and } \exp \left(z \frac{\partial \Phi}{\partial z} \right) = 1
\]
are respectively equivalent to
\begin{equation} \label{eq:delaW}
	a z -1 - l_1(z-a)=0,
\end{equation}
\begin{equation} \label{eq:delbW}
	b z -1 - l_2(z-b)=0,
\end{equation}
and
\begin{equation} \label{eq:delzW}
	(a z -1)(b z -1)(z -a)(z -b)-ab z(z^2-1)(z+1)=0. 
\end{equation}
From the equation \eqref{eq:delaW}, we have
\[
	z= \frac{l_1 a -1}{l_1-a}.
\]
Substituting it to the equation \eqref{eq:delzW}, we have
\begin{gather*}
	A_1(W)(l_1,a,b) = a^2 b l_1^3+(a^2 b^2-ab^2+a^2-2 ab-a+b)l_1^2\\
	\quad+(a^2 b-ab^2-2 ab+b^2-a+1)l_1+b = 0.
\end{gather*}
The left-hand side coincides with the polynomial obtained in \cite{Mu}.
Similarly, we can obtain the polynomial
\[
	A_2(W)(l_2,a,b) = A_1(W)(l_2,b,a)
\]
from the equations \eqref{eq:delbW} and \eqref{eq:delzW}.
Let $p_0(l_1,a,z)$ and $p_1(a,b,z)$ respectively be polynomials on the left-hand side of
the equation \eqref{eq:delaW} and \eqref{eq:delzW}.
Note that the polynomial on the left-hand side of the equation \eqref{eq:delbW} is $p_0(l_2,b,z)$.
We also let $\alpha$ and $ \beta $ be variables satisfing $a = \alpha^2$ and $b=\beta^2$.
Then the polynomials $A_1(W)$ and $A_2(W)$ belong to the ideal
\[
	I_W^\Phi = \langle p_0(l_1,\alpha^2,z),p_0(l_2,\beta^2,z),p_1(\alpha^2,\beta^2,z) \rangle \cap \mathbb{Q}(\alpha,\beta)[l_1,l_2]
\]
of the algebra $\mathbb{Q}(\alpha,\beta)[l_1,l_2]$.
\end{ex} %Remark a and alpha
\begin{rem}
	The variable $a$ here corresponds to the dilation component of the meridian.
	Thus, letting $\alpha$ be one of the eigenvalues of the action of the meridian, $a = \alpha^2$ holds.
\end{rem}
Recall that we only focus on a family of $SL(2,\mathbb{C})$-representations corresponding to hyperbolic structures.
In fact, for a hyperbolic knot, namely $k=1$, we cannot obtain the factor $l_1-1$ of the $A$-polynomial corresponding to abelian representations.
Therefore, we next formulate an ideal $I(\cL)$ that expresses
the conditions for defining an arbitrary family of $SL(2,\mathbb{C})$-representations parametrized by $\alpha_1,\ldots,\alpha_k$,
regardless of whether it corresponds to hyperbolic structures or not.
In the following formulation, the link $\cL$ need not be a hyperbolic link.
Let $O$ be the set of all possible orientations of the link $\cL$.
We fix an orientation $\sigma \in O$ of the link $\cL$.
The $j$-th longitude $\lambda_j$ is oriented in the same direction as the $j$-th component $\cL_j$,
and $j$-th meridian $\mu_j$ is chosen to have a right-handed orientation with respect to the longitude $\lambda_j$.
We start with a presentation of $\pi_1(M)$
\[
	\pi_1(M) = \langle x_1,\ldots,x_s \mid r_1,\ldots,r_t \rangle.
\]
We put
\[
	\varrho(x_a) = \left(\begin{matrix}
	x_{11}^{(a)} & x_{12}^{(a)} \\
	x_{21}^{(a)} & x_{22}^{(a)}
	\end{matrix}\right),\quad (a=1,\ldots,s).
\]
The entries satisfy
\begin{equation} \label{eq:detx2}
	x_{11}^{(a)}x_{22}^{(a)}-x_{12}^{(a)}x_{21}^{(a)}=1,\quad (a=1,\ldots,s).
\end{equation}
For $ \varrho $ to be an $SL(2,\mathbb{C})$-representation, the entries should also satisfy
\begin{equation} \label{eq:rhor2}
	\varrho(r_b) = I_2,\quad (b=1,\ldots,t).
\end{equation}
where $I_2$ is the $2 \times 2$ identity matrix.
\eqref{eq:rhor2} is a set of $4t$ algebraic equations.
For each $j$, since $\pi_1(T_j)$ is an abelian group,
$\varrho_j(\lambda_j)$ commutes with $\varrho_j(\mu_j)$.
Therefore, these can be simultaneously diagonalized, and furthermore,
we can choose elements in $SL(2,\mathbb{C})$ as the matrices for diagonalization.
Namely, there exists $g_j=(g^{(j)}_{\alpha \beta})$ such that
\begin{align*}
	\left(\begin{matrix}
	g_{22}^{(j)} & -g_{12}^{(j)} \\
	-g_{21}^{(j)} & g_{11}^{(j)}
	\end{matrix}\right) \varrho_j(\lambda_j) \left(\begin{matrix}
	g_{11}^{(j)} & g_{12}^{(j)} \\
	g_{21}^{(j)} & g_{22}^{(j)}
	\end{matrix}\right) = \left(\begin{matrix}
	l_j & 0 \\
	0 & \overline{l}_j
	\end{matrix}\right),\\
	\left(\begin{matrix}
	g_{22}^{(j)} & -g_{12}^{(j)} \\
	-g_{21}^{(j)} & g_{11}^{(j)}
	\end{matrix}\right) \varrho_j(\mu_j) \left(\begin{matrix}
	g_{11}^{(j)} & g_{12}^{(j)} \\
	g_{21}^{(j)} & g_{22}^{(j)}
	\end{matrix}\right) = \left(\begin{matrix}
	\alpha_j & 0 \\
	0 & \alpha_j^{-1}
	\end{matrix}\right),
\end{align*}
with
\begin{equation} \label{eq:detg2}
	g_{11}^{(j)}g_{22}^{(j)}-g_{12}^{(j)}g_{21}^{(j)}=1.
\end{equation}
Here,
\begin{equation} \label{eq:lj2}
	l_j \overline{l}_j =1
\end{equation}
holds.
Let $W_j$ and $W'_j$ respectively be words representing $(\iota_j)_*(\lambda_j)$ and $(\iota_j)_*(\mu_j)$.
Then, equalities
\begin{equation} \label{eq:diagl2}
	\left(\begin{matrix}
	g_{11}^{(j)} & g_{12}^{(j)} \\
	g_{21}^{(j)} & g_{22}^{(j)}
	\end{matrix}\right)\left(\begin{matrix}
	l_j & 0 \\
	0 & \overline{l}_j
	\end{matrix}\right)\left(\begin{matrix}
	g_{22}^{(j)} & -g_{12}^{(j)} \\
	-g_{21}^{(j)} & g_{11}^{(j)}
	\end{matrix}\right) = \varrho(W_j)
\end{equation}
and
\begin{equation} \label{eq:diaga2}
	\left(\begin{matrix}
	g_{11}^{(j)} & g_{12}^{(j)} \\
	g_{21}^{(j)} & g_{22}^{(j)}
	\end{matrix}\right)\left(\begin{matrix}
	\alpha_j & 0 \\
	0 & \alpha_j^{-1}
	\end{matrix}\right)\left(\begin{matrix}
	g_{22}^{(j)} & -g_{12}^{(j)} \\
	-g_{21}^{(j)} & g_{11}^{(j)}
	\end{matrix}\right) = \varrho(W'_j)
\end{equation}
hold for all $j=1,\ldots,k$.
Comparing entries of \eqref{eq:diagl2} and \eqref{eq:diaga2}, we have a total of $8k$
$\mathbb{Q}(\alpha_1,\ldots,\alpha_k)$-coefficient algebraic equations.
All equations \eqref{eq:detx2}, \eqref{eq:rhor2}, \eqref{eq:detg2}, \eqref{eq:lj2}, \eqref{eq:diagl2} and \eqref{eq:diaga2}
are equivalent to ones of the form (a $\mathbb{Q}(\alpha_1,\ldots,\alpha_k)$-coefficient polynomial)$=0$.
Let $S$ be the set of all those $\mathbb{Q}(\alpha_1,\ldots,\alpha_k)$-coefficient polynomials.
The set $S$ generates the ideal $\langle S \rangle$ in the algebra consisting of $\mathbb{Q}(\alpha_1,\ldots,\alpha_k)$-coefficient polynomials
in $x_{\alpha \beta}^{(a)}$'s, $g_{\alpha \beta}^{(j)}$'s, $l_j$'s and $\overline{l}_j$'s.
Then, we put
\[
	I^{\sigma}(\cL) = \langle S \rangle \cap \mathbb{Q}(\alpha_1,\ldots,\alpha_k)[l_1,\ldots,l_k].
\]
\begin{defi} \label{def:Aideal}
We define the ideal $I(\cL)$ as the intersection of the ideals $I^{\sigma}(\cL)$ over all $\sigma \in O$:
\[
	I(\cL) = \bigcap _{\sigma \in O} I^{\sigma}(\cL).
\]
\end{defi}
%Assume that $\cL$ is a hyperbolic link.
%Let $P_1,\ldots,P_d$ be generators of $I(\cL)$.
%If one of the polynomials $P_i$ is in the algebra $\mathbb{Q}(\alpha_1,\ldots,\alpha_k)[l_j]$, $P_i$ would be of the form
%\[
%	P_i = (\text{some factors})A_j(\cL).
%\]
%In other words, the ideal $I(\cL)$ would be contained in the ideal $I_{\cL}^\Phi$.
\begin{rem}
	Polynomials $g_j's$, where $g_j$ is a generator of the elimination ideal
	\[
		I(\cL) \cap \mathbb{Q}(\alpha_1,\ldots,\alpha_k)[l_j],\text{ with } j=1,\ldots,k,
	\]
	would not generate the ideal $I(\cL)$ in general.
	This is because the polynomials $g_j$'s represent the conditions that hold for each component
	and would not tell us whether a component has the same orientation as another component or not.
	Therefore, we introduce the $B$-polynomial for the Hopf link and the Whitehead link
	as a polynomial that would indicate the orientation of the components.
\end{rem}
\begin{ex}
For the Whitehead link, we can obtain
\[
	B_{12}(W)(l_1,l_2,a,b)=(l_1 a-1)(l_2-b)-(l_2 b-1)(l_1-a) = 0
\]
from \eqref{eq:delaW} and \eqref{eq:delbW}.
Let us call the polynomial $B_{12}(W)$ the $B$-polynomial for the Whitehead link $W$.
The polynomial $B_{12}(W)$ also belongs to the ideal $I_W^\Phi$.
The $B$-polynomial for the Hopf link will be given in Section \ref{sec:cal}.
\end{ex}

\subsection{Noncommutative case} \label{ssec:noncom}
On the other hand, we will define the $\hA$-polynomial for each component of a link $\cL=\cL_1 \cup \cdots \cup \cL_k$.
For $V_{\cL}(n_1,\ldots,n_k)$, we define operators $Q_i$ and $E_j$ by
\begin{align*}
	(Q_i V_{\cL})(n_1,\ldots,n_k) &= s^{n_i}V_{\cL}(n_1,\ldots,n_k), \\
	(E_j V_{\cL})(n_1,\ldots,n_k) &= V_{\cL}(n_1,\ldots,n_j+1,\ldots,n_k).
\end{align*}
For each $j$, $Q_1,\ldots, Q_k$, and $E_j$ generate the algebra $\A^{j;k}$ consisting of $\mathbb{Z}[s,Q_1,\ldots,\allowbreak Q_k]$-coefficient polynomials
in $E_j$ with relations
\[
	E_j Q_j = sQ_j E_j \text{ and } E_jQ_i = Q_i E_j, \text{ with } i \neq j.
\]
The argument in Section \ref{sec:cretele} implies the existence of the annihilating polynomial in the algebra $\A^{j;k}$:
\[
	\left(\sum _p c_p(s,Q_1,\ldots,Q_k)E_j^p \right)V_{\cL}(n_1,\ldots,n_k)=0,
\]
where $c_p(s,Q_1,\ldots,Q_k)$ is a polynomial with integer coefficients.
Then, we define the algebra $\A^{j;k}_{\mathrm{loc}}$, which is the localization of $\A^{j;k}$, by
\[
	\A^{j;k}_{\mathrm{loc}} = \left\{\sum^\infty_{p=0}a_p(s,Q_1,\ldots,Q_k)E_j^p \middle|
	\begin{array}{ll}
		a_p(s,Q_1,\ldots,Q_k) \in \mathbb{Q}(s,Q_1,\ldots,Q_k),\\
		a_p=0 \text{ for sufficiently large }p.
	\end{array} \right\},
\]
where the multiplication of monomials is given by
\begin{align*}
	&a(s,Q_1,\ldots,Q_k)E_j^p \cdot a'(s,Q_1,\ldots,Q_k)E_j^{p'}\\
	&\quad= a(s,Q_1,\ldots,Q_k)a'(s,Q_1,,\ldots,s^pQ_j \ldots,Q_k)E_j^{p+p'}.
\end{align*}
The $\hA$-polynomial $\hA_{j}(\cL)(s,E_j,Q_1,\ldots,Q_k)$ for the $j$-th component is a $\mathbb{Z}[s,Q_1,\ldots,\allowbreak Q_k]$-coefficient polynomial
that generate the annihilating ideal of $V_{\cL}(n_1,\ldots,n_k)$
\[
	I_j=\{P \in \A^{j;k}_{\mathrm{loc}} \mid PV_{\cL}(n_1,\ldots,n_k)=0\} \subset \A^{j;k}_{\mathrm{loc}}.
\]
We calculate annihilating polynomials of the colored Jones polynomial
for the Hopf link and the Whitehead link in Section \ref{sec:cal}.

\subsection{Annihilating ideal} \label{sec:AnotherAnn}
Let $\A_k$ be the algebra of noncommutative polynomials of the form
\[
	\sum _{j_1,\ldots,j_k}c_{j_1,\ldots,j_k}(s,Q_1,\ldots,Q_k)E_1^{j_1} \cdots E_k^{j_k},
\]
where $c_{j_1,\ldots,j_k}(s,Q_1,\ldots,Q_k) \in \mathbb{Q}(s,Q_1,\ldots,Q_k)$, with relations
\[
	E_i^l p(s,Q_1,\ldots,Q_k) = p(s,Q_1,\ldots,s^lQ_i,\ldots,Q_k) E_i^l \text{ and } E_iE_j = E_jE_i,
\]
where $p(s,Q_1,\ldots,Q_k) \in \mathbb{Q}(s,Q_1,\ldots,Q_k)$.
For the link $\cL$, the annihilating polynomials $\hA_j(\cL)$, with $j=1,\ldots,k$, are in the left ideal
\[
	\hI(\cL) = \{P\in \A_k \mid PV_{\cL}(n_1,\ldots,n_k) = 0\}
\]
of the algebra $\A_k$.
Let us call the ideal the $\hA$-ideal.
The fundamental question on the $\hA$-ideal is as follows:
\begin{question} \label{q:Asgen}
	Determine the generators of the $\hA$-ideal $\hI(\cL)$.
\end{question}
In fact, the $\hA$-polynomials are not enough as generators.
For the Hopf link and the Whitehead link, we need a $\mathbb{Q}(s,Q_1,Q_2)$-coefficient polynomial in $E_1$ and $E_2$
where the degree of each variable is $1$.
We call such a polynomial the $\hB$-polynomial.
In Section \ref{sec:cal}, we give the $\hB$-polynomial for the Hopf link and the Whitehead link,
and find the relationship between the $\hA$-polynomials and the $\hB$-polynomial.

\section{Calculations of the annihilating polynomials} \label{sec:cal}
\subsection{Hopf link}
Let us calculate the annihilating polynomials of 
the colored Jones polynomial $V_H(m,n)=V_{(m,n)}(H;s)$ of the Hopf link $H$,
though this is not a hyperbolic link.
The colored Jones polynomial $V_H(m,n)$ for the Hopf link $H$ is
	\[
		V_H(m,n) = [mn]_s.
	\]
\subsubsection{Annihilating polynomial with respect to $m$}
	
	By using the equality
	\[
		\{(m+2)n\}_s - (s^n+s^{-n})\{(m+1)n\}_s + \{mn\}_s=0,
	\]
	we obtain
	\[
		\hA_{1}(H)(s,E_1,Q_1,Q_2)=E_1^2-(Q_2+Q_2^{-1})E_1 + 1
	\]
	as an annihilating polynomial of $V_H(m,n)$,
	where $E_1$, $Q_1$ and $Q_2$ are operators defined as follows:
	\begin{align*}
		(E_1 V_H)(m,n) &= V_H(m+1,n),\\
		(Q_1 V_H)(m,n) &= s^m V_H(m,n),\\
		(E_2 V_H)(m,n) &= V_H(m,n+1),\\
		(Q_2 V_H)(m,n) &= s^n V_H(m,n).
	\end{align*}
	Since $E_1$ and $Q_2$ commute, we can factorize the polynomial $\hA_{1}(H)$ as
	\[
		(E_1-Q_2)(E_1-Q_2^{-1}) = (E_1-Q_2^{-1})(E_1-Q_2).
	\]
	The factors $E_1-Q_2$ and $E_1-Q_2^{-1}$, however, do not annihilate $V_H(m,n)$.
	Thus, the polynomial $\hA_{1}(H)$ is the annihilating polynomial of $V_H(m,n)$ with the least $E_1$-degree.
	Similarly, we have
	\[
		\hA_{2}(H)(s,E_2,Q_1,Q_2)=E_2^2-(Q_1+Q_1^{-1})E_2 + 1
	\]
\subsubsection{$\hB$-polynomial of $V_H(m,n)$}
Since $\{1\}_s$ is a constant,
it suffices to find the recurrence relation of $\{mn\}_s$.
Acting $E_1$ and $E_2$, we have
\begin{equation} \label{eq:Emmn}
	E_1 \{mn\}_s = \{(m+1)n\}_s = s^{(m+1)n}-s^{-(m+1)n},
\end{equation}
and
\[
	E_2 \{mn\}_s = \{m(n+1)\}_s = s^{m(n+1)}-s^{-m(n+1)}.
\]
Multiplying $\{m\}_s=s^m-s^{-m}$ to \eqref{eq:Emmn} from the left, we have
\begin{equation} \label{eq:EmJHa}
	(s^m-s^{-m})E_1\{mn\}_s = s^{mn+m+n}-s^{-mn+m-n}-s^{mn-m+n}+s^{-mn-m-n}.
\end{equation}
Similarly,
\begin{equation} \label{eq:EnJHa}
	(s^n-s^{-n})E_2\{mn\}_s = s^{mn+m+n}-s^{-mn-m+n}-s^{mn+m-n}+s^{-mn-m-n}.
\end{equation}
Subtracting \eqref{eq:EnJHa} from \eqref{eq:EmJHa}, we have
\begin{align*}
	&\{(s^m-s^{-m})E_1-(s^n-s^{-n})E_2\}\{mn\}_s\\
	&\quad= s^{-mn-m+n}+s^{mn+m-n}-s^{-mn+m-n}-s^{mn-m+n}\\
	&\quad=(s^{m-n}-s^{n-m})(s^{mn}-s^{-mn}) = (s^{m-n}-s^{n-m})\{mn\}_s.
\end{align*}
Therefore,
%\begin{align} 
%\begin{split}
\begin{equation}\label{eq:hlann}
	\hB_{12}(H)(s,E_1,E_2,Q_1,Q_2)=a(Q_1)E_1 - a(Q_2)E_2 + a(Q_2Q_1^{-1})
\end{equation}
%\end{split}
%\end{align}
is an annihilating polynomial of $V_H(m,n)$, where $a(x)=x-x^{-1}$.
This has less degree with respect to $E_1$ than $\hA_{1}(H)$ and is alternating under the transposition of $1$ and $2$.

\subsubsection{Generators of the $\hA$-ideal $\hI(H)$}
When we `divide' $\hA_{1}(H)(s,E_1,Q_1,Q_2)$ by \eqref{eq:hlann}, we have
\begin{align}
\begin{split} \label{eq:hlanndiv}
	&\hA_{1}(H)(s,E_1,Q_1,Q_2)\\
	&\quad=\left(\frac{1}{a(sQ_1)}E_1+p(s,E_2,Q_1,Q_2)\cdot \frac{1}{a(Q_1)}\right)\hB_{12}(H)(s,E_1,E_2,Q_1,Q_2)\\
	&\quad \quad+\frac{a(Q_2)a(sQ_2)}{a(Q_1)a(sQ_1)}\hA_{2}(H)(s,E_2,Q_1,Q_2),
\end{split}
\end{align}
where
\[
	p(s,E_2,Q_1,Q_2)=\frac{a(Q_2)}{a(sQ_1)}E_2-Q_2-Q_2^{-1}-\frac{1}{a(sQ_1)}a \left( \frac{Q_2}{sQ_1}\right).
\]
Multiplying $a(Q_1)a(sQ_1)$ to \eqref{eq:hlanndiv} from the left, we have
\begin{align}
\begin{split} \label{eq:hlanndiv2}
	&a(Q_1)a(sQ_1)\hA_{1}(H)(s,E_1,Q_1,Q_2)\\
	&\quad -a(Q_2)a(sQ_2)\hA_{2}(H)(s,E_2,Q_1,Q_2)\\
	&\quad \quad=\left\{a(Q_1)E_1+a(Q_2)E_2-a(sQ_1Q_2)\right\}\hB_{12}(H)(s,E_1,E_2,Q_1,Q_2).
\end{split}
\end{align}
More generally, when dividing any polynomial in the $\hA$-ideal $\hI(H)$ by $\hB_{12}(H)$ as polynomials in $E_1$,
the remainder is the polynomial in $s,\ E_2,\ Q_1$ and $Q_2$ since $\deg(\hB_{12}(H);E_1)=1$.
This implies the following theorem: %that we can choose $\hB_{12}(H)$ and one of $\hA_{1}(H)$ and $\hA_{2}(H)$ as generators of the ideal $\hI(H)$.
\begin{thm} \label{thm:genhIH}
	$\hI(H) = \langle \hA_{1}(H),\ \hB_{12}(H)\rangle $ holds.
\end{thm}
\subsubsection{Evaluation at $s=1$}
Evaluating $\hA_1(H)$ at $s=1$, we have
\[
	\varepsilon _s \hA_1(H) = E_1^2-(Q_2+Q_2^{-1})E_1 + 1.
\]
Evaluating the $\hB$-polynomial at $s=1$, we have
\[
	B_{12}(H)(E_1,E_2,Q_1,Q_2)=a(Q_1)E_1 - a(Q_2)E_2 + a(Q_2Q_1^{-1}).
\]
Let us call this polynomial the $B$-polynomial of the Hopf link.
Therefore,
\[
	\varepsilon _s \hI(H)=\langle \varepsilon _s \hA_1(H), B_{12}(H) \rangle.
\]
\begin{thm} \label{thm:cjforH}
	$\varepsilon _s \hI(H) = I(H)$ holds.
\end{thm}
\begin{proof}
The Hopf link $H$ has two possible orientations: one in which the second component is right-handed relative to the first,
and the other in which it is left-handed.
We denote the former by $+$ and the latter by $-$.
Let $ \lambda _j$ be the longitude, and let $ \mu _j$ be the meridian of the $j$-th component, with $j=1,2$.
Since the link group of the Hopf link is an abelian group generated by two meridians $\mu_1$ and $\mu_2$,
the link group has only abelian representations.
Commutativity implies that the representation of the link group can be assumed to have the following form:
\[
	\lambda _j \mapsto \left(
	\begin{array}{cc}
		E_j&	0 \\
		0&	E_j^{-1}
	\end{array}
	\right),\quad \mu _j \mapsto \left(
	\begin{array}{cc}
		Q_j&	0 \\
		0&	Q_j^{-1}
	\end{array}
	\right),
\]
with $j=1,2$.
Under the orientation $+$, $\lambda_1 = \mu_2$ and $\lambda_2 = \mu_1$ hold.
These imply $E_1=Q_2$ and $E_2=Q_1$.
Thus
\[
	I^{+}(H) = \langle E_1-Q_2,\ E_2-Q_1 \rangle.
\]
On the other hand, $\lambda_1 = \mu_2^{-1}$ and $\lambda_2 = \mu_1^{-1}$ hold under the orientation $-$.
These imply $E_1=Q_2^{-1}$ and $E_2=Q_1^{-1}$.
Thus
\[
	I^{-}(H) = \langle E_1-Q_2^{-1},\ E_2-Q_1^{-1} \rangle.
\]
Therefore the ideal $I(H)$ is
\begin{gather*}
	I(H) = I^{+}(H) \cap I^{-}(H) \\
	=\langle \varepsilon _s \hA_1(H),\ \varepsilon _s \hA_2(H),\ (E_1-Q_2)(E_2-Q_1^{-1}),\ (E_1-Q_2^{-1})(E_2-Q_1)\rangle.
\end{gather*}
Subtracting the fourth polynomial from the third yields the $B$-polynomial $B_{12}(H)$.
Thus,
\[
	I(H)=\langle \varepsilon _s \hA_1(H),\ \varepsilon _s \hA_2(H),\ B_{12}(H),\ (E_1-Q_2^{-1})(E_2-Q_1)\rangle.
\]
The fourth polynomial satisfies
\[
	(E_1-Q_2^{-1})(E_2-Q_1) = \frac{a(Q_1)}{a(Q_2)}\varepsilon _s \hA_1(H) -\frac{1}{a(Q_2)}(E_1-Q_2^{-1})B_{12}(H),
\]
and is therefore redundant for the set of generators.
Therefore,
\[
	I(H)=\langle \varepsilon _s \hA_1(H),\ \varepsilon _s \hA_2(H),\ B_{12}(H)\rangle.
\]
Similar to the non-commutative case, $\hA_2(H)$ is redundant as a generator, and therefore $\varepsilon _s \hI(H) = I(H)$ holds.
\end{proof}
\begin{rem}
The roots of $\varepsilon _s \hA_1(H)$ are $E_1=Q_2^{\pm 1}$.
Similarly, we have $E_2 = Q_1^{\pm 1}$ from $\varepsilon _s \hA_2(H) = 0 $.
Therefore, $\varepsilon _s \hA_j(H)$ yields the possible values of the eigenvalue $E_j$ associated with the $j$-th component, with $j=1,2$.
Among the possible combinations of roots, $(E_1,E_2)=(Q_2,Q_1)$, and $(Q_2^{-1},Q_1^{-1})$ satisfy$B_{12}(H)=0$.
Therefore, $B$-polynomial represents the condition for the orientations of the components to be consistent.
\end{rem}

\subsection{Whitehead link}
Let us calculate the annihilating polynomials of 
the colored Jones polynomial $V_W(m,n)=V_{(m,n)}(W;s)$ of the Whitehead link $W$.
Recall that the colored Jones polynomial $V_W(m,n)$ for the Whitehead link is
	\[
		V_W(m,n) = \sum _{i=0}^{\min(m,n)-1} F(m,n;i),
	\]
	where
	\[
		F(m,n;i)=(-1)^{m+n}s^{-\frac{i^2+3i}{2}}\frac{\{m+i\}_s!\{n+i\}_s!\{i\}_s!}{\{1\}_s\{m-i-1\}_s!\{n-i-1\}_s!\{2i+1\}_s!}.
	\]
\subsubsection{Annihilating polynomial with respect to $m$} \label{sssec:A1WL}
	We define operators $E_1$, $Q_1$, $Q_2$, $\tE_1$ and $\tQ_1$ as follows:
	\begin{align*}
		(E_1F)(m,n;i)&=F(m+1,n;i),\\
		(Q_1F)(m,n;i)&=s^mF(m,n;i),\\
		(Q_2F)(m,n;i)&=s^nF(m,n;i),\\
		(\tE_1F)(m,n;i)&=F(m,n;i+1),\\
		(\tQ_1F)(m,n;i)&=s^iF(m,n;i).
	\end{align*}
	Then, $\frac{E_1F}{F}$ and $\frac{\tE_1F}{F}$ are
	\begin{align*}%
		\frac{E_1F}{F} &= -\frac{(s^{2(m+i+1)}-1)}{s(s^{2m}-s^{2i})}=\frac{(1-s^{2(m+i+1)})s^m}{(s^{2m}-s^{2i})s^{m+1}},\\
		\frac{\tE_1F}{F} &= s^{2i-2m-2n+2}\frac{(s^{2(m+i+1)}-1)(s^{2(m-i-1)}-1)(s^{2(n+i+1)}-1)(s^{2(n-i-1)}-1)}{(s^{2(2i+3)}-1)(s^{2(i+1)}+1)}.
	\end{align*}
Therefore, we can obtain an annihilating polynomial of $F(m,n;i)$ by eliminating $\tQ_1$ from
	\begin{align}
		&(E_1-s^2Q_1^2)\tQ_1^2Q_1 = (Q_1^2E_1-1)Q_1, \label{eq:whlQ2}\\
		\begin{split} \label{eq:whlE1}
		&s^2Q_1^2Q_2^2(s^6 \tQ_1^4-1)(s^2 \tQ_1^2+1)\tQ_1^2 \tE_1\\
		&\ - (s^2Q_1^2 \tQ_1^2-1)(Q_1^2-s^2 \tQ_1^2)(s^2Q_2^2 \tQ_1^2-1)(Q_2^2-s^2 \tQ_1^2) = 0.
		\end{split}
	\end{align}
Since the left-hand side of \eqref{eq:whlE1} has a degree of $8$ with respect to $\tQ_1$,
eliminating $\tQ_1$ using \eqref{eq:whlQ2} requires a polynomial with a degree of at least $4$ with respect to $E_1$.
Let us find this polynomial.
By multiplying $(s^4Q_1^2Q_2^2 \tQ_1^4)^{-1}Q_1$ to \eqref{eq:whlE1} from the left, we have
\begin{align} 
\begin{split}\label{eq:whlE1a}
	&\left\{ \tE_1(s^2 \tQ_1^4+s^2 \tQ_1^2-1- \tQ_1^{-2}) \right.\\
	&\quad -s^4 \tQ_1^4+s^2(Q_1^2+Q_2^2+Q_1^{-2}+Q_2^{-2})\tQ_1^2 \\
	&\quad \quad -(Q_1^{2}Q_2^{2}+Q_1^{2}Q_2^{-2}+2+Q_1^{-2}Q_2^{2}+Q_1^{-2}Q_2^{-2})\\
	&\quad \quad \quad \left.+s^{-2}(Q_1^2+Q_2^2+Q_1^{-2}+Q_2^{-2}) \tQ_1^{-2} -s^{-4} \tQ_1^{-4}\right\}Q_1=0.
\end{split}
\end{align}
By multiplying $\tQ_1^{-2}$ to \eqref{eq:whlQ2} from the left, we have
\begin{equation} \label{eq:whlQ2inv}
	(Q_1^2E_1-1)\tQ_1^{-2}Q_1 = (E_1-s^2Q_1^2)Q_1.
\end{equation}
We put $\Phi_k(Q_1)=1-s^kQ_1^4$ for an integer $k$.
Concerning $E_1-s^2Q_1^2$ and $Q_1^2E_1-1$,
\begin{equation} \label{eq:sfact}
	u_1(s,E_1,Q_1)(Q_1^2E_1-1) = v_1(s,E_1,Q_1)(E_1-s^2Q_1^2)
\end{equation}
holds, where
\[
	u_1(s,E_1,Q_1) = \frac{1}{\Phi_6(Q_1)}E_1 - \frac{s^2Q_1^2}{\Phi_2(Q_1)},
\]
and
\[
	v_1(s,E_1,Q_1) = \frac{s^2Q_1^2}{\Phi_6(Q_1)}E_1 - \frac{1}{\Phi_2(Q_1)}.
\]
By multiplying $u_1(s,E_1,Q_1)\tQ_1^2$
to \eqref{eq:whlQ2} from the left and using \eqref{eq:sfact},
we have
\begin{align}
\begin{split} \label{eq:whlQ4}
	&u_1(s,E_1,Q_1)(E_1-s^2Q_1^2)\tQ_1^4Q_1\\
	&\quad =v_1(s,E_1,Q_1)(Q_1^2E_1-1)Q_1.
\end{split}
\end{align}
We also have
\begin{align}
\begin{split} \label{eq:whlQ4inv}
	&v_1(s,E_1,Q_1)(Q_1^2E_1-1)\tQ_1^{-4}Q_1\\
	&\quad =u_1(s,E_1,Q_1)(E_1-s^2Q_1^2)Q_1
\end{split}
\end{align}
by multiplying $Q_1^{-4}$ to \eqref{eq:whlQ4} from the left.
Noting that the polynomial
\[
	Y(s,E_1,Q_1) = u_2(s,E_1,Q_1)u_1(s,E_1,Q_1)(E_1-s^2Q_1^2),
\]
where
\[
	u_2(s,E_1,Q_1) = \frac{s^{12}Q_1^4}{\Phi_{10}(Q_1)\Phi_{12}(Q_1)}E_1^2 - \frac{s^4(1+s^2)Q_1^2}{\Phi_6(Q_1)\Phi_{10}(Q_1)}E_1 + \frac{1}{\Phi_4(Q_1) \Phi_6(Q_1)}
\]
is also factorized as
\[
	Y(s,E_1,Q_1) = v_2(s,E_1,Q_1)v_1(s,E_1,Q_1)(Q_1^2E_1-1),
\]
where
\[
	v_2(s,E_1,Q_1) = \frac{1}{\Phi_{10}(Q_1)\Phi_{12}(Q_1)}E_1^2 - \frac{s^2(1+s^2)Q_1^2}{\Phi_6(Q_1)\Phi_{10}(Q_1)}E_1 + \frac{s^4Q_1^4}{\Phi_4(Q_1) \Phi_6(Q_1)},
\]
we can immediately eliminate $\tQ_1^4$, $\tQ_1^{-4}$, $(Q_2^2+Q_2^{-2})\tQ_1^2$ and $(Q_2^2+Q_2^{-2})\tQ_1^{-2}$ from \eqref{eq:whlE1a}
by multiplying $Y(s,E_1,Q_1)$ from the left and using \eqref{eq:whlQ2}, \eqref{eq:whlQ2inv}, \eqref{eq:whlQ4} and \eqref{eq:whlQ4inv}.
Regarding $(Q_1^2+Q_1^{-2})\tQ_1^2$, % and $(Q_1^2+Q_1^{-2})Q_1^{-2}$
\begin{align*}
	(E_1-s^2Q_1^2)&Q_1^2 \tQ_1^2 = s^2Q_1^2(E_1-Q_1^2)\tQ_1^2 \\
	&=s^2Q_1^2(E_1-s^2Q_1^2)\tQ_1^2 + s^2(s^2-1)Q_1^4 \tQ_1^2,
\end{align*}
and
\begin{align*}
	(E_1-s^2Q_1^2)&Q_1^{-2} \tQ_1^2 = s^{-2}Q_1^{-2}(E_1-s^4Q_1^2) \tQ_1^2 \\
	&=s^{-2}Q_1^{-2}(E_1-s^2Q_1^2)\tQ_1^2 + (1-s^2)\tQ_1^2
\end{align*}
hold. Summing up these equalities, we have
\begin{align*}
	(E_1-s^2Q_1^2)&(Q_1^2+Q_1^{-2})\tQ_1^2 \\
	&=(s^2Q_1^2+s^{-2}Q_1^{-2})(E_1-s^2Q_1^2)\tQ_1^2 + (1-s^2)(1-s^2Q_1^4)\tQ_1^2.
\end{align*}
Noting that
\[
	u_1(s,E_1,Q_1)=\frac{1}{\Phi_6(Q_1)}E_1 - \frac{s^2Q_1^2}{\Phi_2(Q_1)} = (E_1-s^2Q_1^2) \frac{1}{\Phi_2(Q_1)},
\]
we obtain
\begin{align*}
	&u_1(s,E_1,Q_1)(E_1-s^2Q_1^2)(Q_1^2+Q_1^{-2})\tQ_1^2\\
	& \quad =u_1(s,E_1,Q_1)(s^2Q_1^2+s^{-2}Q_1^{-2})(E_1-s^2Q_1^2)\tQ_1^2 \\
	&\quad \quad+ (1-s^2)(E_1-s^2Q_1^2)\tQ_1^2.
\end{align*}
Similarly, we obtain
\begin{align*}
	&v_1(s,E_1,Q_1)(Q_1^2E_1-1)(Q_1^2+Q_1^{-2})\tQ_1^{-2}\\
	& \quad =v_1(s,E_1,Q_1)(s^2Q_1^2+s^{-2}Q_1^{-2})(Q_1^2E_1-1)\tQ_1^{-2} \\
	&\quad \quad+ s^{-2}(1-s^2)Q_1^{-2}(Q_1^2E_1-1)\tQ_1^{-2}.
\end{align*}
Therefore, we obtain an annihilating polynomial $P_W(s,E_1,Q_1,Q_2,\tE_1)$ of $F(m,n;i)$
\begin{align*}
	&P_W(s,E_1,Q_1,Q_2,\tE_1)\\
	&\quad=\left\{(s^2 \tE_1-s^4)u_2(s,E_1,Q_1)v_1(s,E_1,Q_1)(Q_1^2E_1-1)\right. \\
	&\quad + s^2(Q_2^2+ \tE_1+Q_2^{-2})u_2(s,E_1,Q_1)u_1(s,E_1,Q_1)(Q_1^2E_1-1)\\
	&\quad +s^2u_2(s,E_1,Q_1)u_1(s,E_1,Q_1)(s^2Q_1^2+s^{-2}Q_1^{-2})(Q_1^2E_1-1)\\
	&\quad +(s^2-s^4)u_2(s,E_1,Q_1)(Q_1^2E_1-1)\\
	&\quad -Y(s,E_1,Q_1)(Q_1^{2}Q_2^{2}+Q_1^{2}Q_2^{-2}+2+ \tE_1+Q_1^{-2}Q_2^{2}+Q_1^{-2}Q_2^{-2})\\
	&\quad +s^{-2}(Q_2^2-s^2 \tE_1+Q_2^{-2})v_2(s,E_1,Q_1)v_1(s,E_1,Q_1)(E_1-s^2Q_1^2)\\
	&\quad +s^{-2}v_2(s,E_1,Q_1)v_1(s,E_1,Q_1)(s^2Q_1^2+s^{-2}Q_1^{-2})(E_1-s^2Q_1^2)\\
	&\quad +(s^{-4}-s^{-2})v_2(s,E_1,Q_1)Q_1^{-2}(E_1-s^2Q_1^2)\\
	&\quad -s^{-4}\left. v_2(s,E_1,Q_1)u_1(s,E_1,Q_1)(E_1-s^2Q_1^2)\right\}Q_1.
\end{align*}
We put
\[
	\tilde{P}_W^0(s,E_1,Q_1,Q_2) = P_W(s,E_1,Q_1,Q_2,1).
\]
We let the expansion of $P_W(s,E_1,Q_1,Q_2,\tE_1)$ around $\tE_1=1$ be
\[
	P_W(s,E_1,Q_1,Q_2,\tE_1) = \tilde{P}_W^0(s,E_1,Q_1,Q_2)+(\tE_1-1)R_W(s,E_1,Q_1),
\]
and put $G(m,n;i)=R_W(s,E_1,Q_1)F(m,n;i)$.
The polynomial $R_1(s,E_1,Q_1)$ is equal to
\[
	R_W(s,E_1,Q_1) = Y(s,E_1,Q_1)(s^2 \tQ_1^4+s^2 \tQ_1^2-1-\tQ_1^{-2})Q_1.
\]
Putting $F'(m,n;i)=(s^2 \tQ_1^4+s^2 \tQ_1^2-1- \tQ_1^{-2})Q_1 F(m,n;i)$, we have
\begin{align*}
	F'(m,n;i)&=(-1)^{m+n}s^{-\frac{i^2+3i}{2}+m}(s^{2+4i}+s^{2+2i}-1-s^{-2i})\\
	&\times \frac{\{m+i\}_s!\{n+i\}_s!\{i\}_s!}{\{1\}_s\{m-i-1\}_s!\{n-i-1\}_s!\{2i+1\}_s!}.
\end{align*}
Especially, we have%
\[
	F'(m,n,0)=(-1)^{m+n}\times 2(s^2-1)s^{m}[m]_s[n]_s.
\]
When we put
\[
	Y(s,E_1,Q_1) = \sum^{4}_{\lambda = 0}c_ \lambda(s,Q_1)E_1^\lambda,
\]
$G(m,n;i)$ is
\[
	G(m,n;i) = \sum^{4}_{\lambda = 0}c_ \lambda(s,s^m)F'(m+\lambda,n;i).
\]
Since the degree of $\tilde{P}_W^0(s,E_1,Q_1,Q_2)$ with respect to $E_1$ is $4$, summing up
\[
	\tilde{P}_W^0(s,E_1,Q_1,Q_2)F(m,n;i)=-(\tE_1-1)G(m,n;i)
\]
with $i=0,\ldots,\min(m+4,n)-1$, we have
\[
	\tilde{P}_W^0(s,E_1,Q_1,Q_2)V_W(m,n) = -\left( G(m,n,\min(m+4,n))-G(m,n,0)\right).
\]
Here, $G(m,n,\min(m+4,n)) =0$ and
\begin{align*}
	G(m,n,0)&=\sum^{4}_{\lambda = 0}c_ \lambda(s,s^m)F'(m+\lambda,n,0)\\
	&=(-1)^{m+n} \times 2(1-s^2)\frac{(1+s^{2m+4})[n]_s}{(1-s^{2+2m})(1-s^{6+2m})}.
\end{align*}
Since $G(m,n,0)$ is annihilated by
\[
	P_W^1(s,E_1,Q_1)=(E_1+1)\times \frac{(1-s^2Q_1^2)(1-s^6Q_1^2)}{1+s^4Q_1^2},
\]
we obtain an annihilating polynomial
\[
	P_W^1(s,E_1,Q_1)\tilde{P}_W^0(s,E_1,Q_1,Q_2)
\]
of $V_W(m,n)$.
Putting $\tilde{P}_W^0(s,E_1,Q_1,Q_2)=P_W^0(s,E_1,Q_1,Q_2)Q_1$, we have
\[
	P_W^1(s,E_1,Q_1)\tilde{P}_W^0(s,E_1,Q_1,Q_2)=Q_1 P_W^1(s,sE_1,Q_1)P_W^0(s,sE_1,Q_1,Q_2).
\]
$P_W^1(s,sE_1,Q_1)P_W^0(s,sE_1,Q_1,Q_2)$ is also an annihilating polynomial of $V_W(m,n)$.
We put
\[
	\Af_{1}(W)(s,E_1,Q_1,Q_2) = P_W^1(s,sE_1,Q_1)P_W^0(s,sE_1,Q_1,Q_2).
\]
Since the factor $P_W^0(s,sE_1,Q_1,Q_2)$ does not annihilate $V_W(m,n)$,
the polynomial $\Af_{1}(W)(s,E_1,Q_1,Q_2)$ is the annihilating polynomial with the smallest $E_1$-degree.
The coefficients of the polynomial $\Af_{1}(W)$ are in $\mathbb{Q}(s,Q_1,Q_2)$.
Clearing the denominators of the coefficients in the polynomial $\Af_1(W)$ yields the $\hA$-polynomial $\hA_1(W)$.
\begin{rem} \label{rem:VvsJ}
Let us observe the annihilating polynomial of the colored Jones polynomial $J_{(n_1,\ldots,n_k)}(\cL;s)$ with the normalization
$J_{(n_1,\ldots,n_k)}(\bigcirc^k;s) = 1$.
Since
\[
	J_W(m,n)= J_{(m,n)}(W;s)= \frac{V_W(m,n)}{[m]_s[n]_s},
\]
we have an annihilating polynomial of $J_W(m,n)$
\[
	P_W^1(s,E_1,Q_1)P_W^0(s,E_1,Q_1,Q_2)(Q_1^2-1).
\]
This is a polynomial in $s^2$, $E_1$, $Q_1^2$, and $Q_2^2$.
\end{rem}

\subsubsection{$\hB$-polynomial of $V_W(m,n)$} %\label{ssubsec:AaltW}
We eliminate $\tQ_1^2$ from \eqref{eq:whlQ2} and
\begin{equation} \label{eq:whlQ2n}
	(E_2-s^2Q_2^2)\tQ_1^2 Q_2 = (Q_2^2E_2-1)Q_2.
\end{equation}%We put $ \alpha(Q_1,Q_2)=(Q_1^2-1)(Q_2^2-1)$.
Multiplying $(E_2-s^2Q_2^2)Q_2$ to \eqref{eq:whlQ2} from the left, we have
\begin{align}
\begin{split} \label{eq:wmQ1a}
	&(E_1-s^2Q_1^2)(E_2-s^2Q_2^2)\tQ_1^2 Q_1Q_2\\
	&\quad=(E_2-s^2Q_2^2)(Q_1^2E_1-1) Q_1Q_2.
\end{split}
\end{align}
Similarly,
\begin{align}
\begin{split} \label{eq:wnQ1a}
	&(E_1-s^2Q_1^2)(E_2-s^2Q_2^2)\tQ_1^2 Q_1Q_2\\
	&\quad=(E_1-s^2Q_1^2)(Q_2^2E_2-1)Q_1Q_2.
\end{split}
\end{align}
Subtracting \eqref{eq:wnQ1a} from \eqref{eq:wmQ1a}, we have
\begin{align*}
	\hB'_{12}(W)&(s,E_1,E_2,Q_1,Q_2)\\
	&=\{(E_2-s^2Q_2^2)(Q_1^2E_1-1)-(E_1-s^2Q_1^2)(Q_2^2E_2-1)\}Q_1Q_2
\end{align*}
as an annihilator of the summand $F(m,n;i)$.
Since
\begin{align*}
	&\{(E_2-s^2Q_2^2)(Q_1^2E_1-1)-(E_1-s^2Q_1^2)(Q_2^2E_2-1)\}Q_1Q_2\\
	&\quad=sQ_1Q_2\{(E_2-sQ_2^2)(sQ_1^2E_1-1)-(E_1-sQ_1^2)(sQ_2^2E_2-1)\},
\end{align*}
$(E_2-sQ_2^2)(sQ_1^2E_1-1)-(E_1-sQ_1^2)(sQ_2^2E_2-1)$ is also an annihilating polynomial of $F(m,n;i)$.
We put
\[
	\hB_{12}(W)(s,E_1,E_2,Q_1,Q_2)=(E_2-sQ_2^2)(sQ_1^2E_1-1)-(E_1-sQ_1^2)(sQ_2^2E_2-1).
\]
Since
\[
	\hB_{12}(W)(s,E_1,E_2,Q_1,Q_2)F(m,n;i)=0
\]
for any $i$, we have
\[
	\hB_{12}(W)(s,E_1,E_2,Q_1,Q_2)V_W(m,n)=0.
\]
\begin{rem}
As in Remark \ref{rem:VvsJ}, we see that
\[
	\{(E_2-s^2Q_2^2)(Q_1^2E_1-1)-(E_1-s^2Q_1^2)(Q_2^2E_2-1)\} \alpha(Q_1,Q_2),
\]
where $\alpha(Q_1,Q_2)=(Q_1^2-1)(Q_2^2-1)$, is an annihilating polynomial of $J_W(m,n)$.
This is a polynomial in $s^2$, $E_1$, $Q_1^2$, and $Q_2^2$ as well.
\end{rem}

\subsubsection{Generators of the ideal $\hI(W)$}
The polynomials $\Af_{1}(W)$ and $\hB_{12}(W)$ are in the left ideal $\hI(W)$ of the algebra
\[
	\A_2 = \left\{\sum _{j_1,j_2}c_{j_1,j_2}(s,Q_1,Q_2)E_1^{j_1}E_2^{j_2} \middle| c_{j_1,j_2}(s,Q_1,Q_2) \in \mathbb{Q}(s,Q_1,Q_2)\right\}.
\]
Let us `divide' $\Af_{1}(W)(s,E_1,Q_1,Q_2)$ by $\hB_{12}(W)(s,E_1,E_2,Q_1,Q_2)$ as we did for the Hopf link.
First, we give a lemma related to the property of the Ore domain.
Let $\deg(f; E_2)$ be the degree of a polynomial $f$ with respect to $E_2$.
\begin{lem} \label{lem:ore}
Let $f(s,E_2,Q_1,Q_2)$ and $g(s,E_2,Q_1,Q_2)$ be polynomials in the algebra $\A_2$.
Suppose that $\deg(g; E_2)=1$.
Then, there exist polynomials $\tilde{f}(s,E_2,Q_1,Q_2)$ and $\tilde{g}(s,E_2,Q_1,Q_2)$ such that
\[
	\tilde{g}(s,E_2,Q_1,Q_2)f(s,E_2,Q_1,Q_2) = \tilde{f}(s,E_2,Q_1,Q_2)g(s,E_2,Q_1,Q_2).
\]
Moreover, $\deg(\tilde{f}; E_2)=\deg(f; E_2)$ and $\deg(\tilde{g}; E_2)=1$.
\end{lem}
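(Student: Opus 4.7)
The polynomials $f$ and $g$ sit in the noncommutative algebra $A=K[E_n;\sigma]$, where $K=\mathbb{Q}(s,Q_m,Q_n)$ and $\sigma$ is the $K$-algebra automorphism sending $Q_n\mapsto sQ_n$ and fixing $s$ and $Q_m$; this is the Ore extension encoding the commutation relation $E_n Q_n=sQ_n E_n$ inherited from $\A_k$. Because $K$ is a field and $\sigma$ is an automorphism, $A$ is a both-sided Euclidean domain with respect to $\deg(\cdot;E_n)$, and my plan is to deduce the lemma from the classical Ore condition in this setting while tracking degrees carefully.

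First I would observe that right Euclidean division by $g$ writes every $h\in A$ uniquely as $h=qg+r$ with $\deg(r;E_n)<\deg(g;E_n)=1$, so $r\in K$. Hence $A/Ag$ is one-dimensional as a left $K$-vector space. Consider the left $K$-linear map
\[
    \Phi_f\colon A\longrightarrow A/Ag,\qquad h\longmapsto hf \bmod Ag,
\]
and restrict it to the two-dimensional $K$-subspace $V=\{aE_n+b:a,b\in K\}\subset A$. Since $\dim_K V=2>1=\dim_K(A/Ag)$, the kernel $\ker(\Phi_f|_V)$ is nonzero, so there exists a nonzero $\tilde g\in V$ with $\tilde g f\in Ag$; writing $\tilde g f=\tilde f g$ for some $\tilde f\in A$ yields the defining identity of the lemma.

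Next I would pin down the degrees. Since $A$ is a domain, $\deg(\tilde g;E_n)+\deg(f;E_n)=\deg(\tilde f;E_n)+\deg(g;E_n)$, so once $\deg(\tilde g;E_n)=1$ is shown, $\deg(\tilde f;E_n)=\deg(f;E_n)$ follows automatically. To rule out the degenerate case $\deg(\tilde g;E_n)=0$, I would split into two cases. If the $\tilde g$ produced above already has degree one, keep it. Otherwise $\tilde g\in K^{\times}$, and the identity $\tilde g f=\tilde f g$ forces $f=\tilde g^{-1}\tilde f g$, so $g$ right-divides $f$; write $f=f'g$ with $\deg(f';E_n)=\deg(f;E_n)-1$, and replace $(\tilde g,\tilde f)$ by $(g,gf')$. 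Then $\tilde g f=gf'g=\tilde f g$, with $\deg(\tilde g;E_n)=1$ and $\deg(\tilde f;E_n)=\deg(f;E_n)$ as required.

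The main obstacle is essentially the degree bookkeeping. The substance of the argument is the standard fact that a skew-polynomial ring over a field is a principal ideal domain; the only delicate point is the case split above, which is exactly why the lemma fixes $\deg(\tilde g;E_n)=1$ rather than the weaker bound $\deg(\tilde g;E_n)\le 1$.
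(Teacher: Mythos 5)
Your proof is correct, but it takes a genuinely different route from the paper's. The paper argues by direct coefficient matching: it reduces to the case where $f$ and $g$ are monic, writes $\tilde f$ and $\tilde g$ as monic polynomials with undetermined coefficients $A_k$ and $B$, expands $\tilde g f=\tilde f g$, and solves the resulting triangular system, showing inductively that each $A_k$ is an affine function of $B$ and then extracting $B$ from a single linear equation in the top-degree relation. Your argument instead invokes the structure of $K[E_n;\sigma]$ as a skew polynomial ring over the field $K=\mathbb{Q}(s,Q_m,Q_n)$: right Euclidean division by $g$ makes $A/Ag$ one-dimensional over $K$, so the left $K$-linear map $h\mapsto hf\bmod Ag$ restricted to the two-dimensional space of polynomials of degree at most one must have nontrivial kernel, which is exactly the Ore condition. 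What your approach buys is robustness and brevity: existence is immediate from a dimension count, the additivity of degrees in a domain handles $\deg(\tilde f;E_n)=\deg(f;E_n)$ automatically, and your explicit case split (replacing $(\tilde g,\tilde f)$ by $(g,gf')$ when $g$ right-divides $f$) cleanly disposes of the degenerate possibility $\deg(\tilde g;E_n)=0$ --- a case the paper's final linear equation for $B$ quietly assumes away (its coefficient $\xi_{d-1}-1$ of $B$ could in principle vanish). What the paper's approach buys is an explicit recursion for the coefficients $A_k$ and $B$, which is what is actually needed to carry out the computations of $\tc_1,\ldots,\tc_4$ and $\tdel_1,\ldots,\tdel_4$ in the Appendix; your existence argument would have to be supplemented by the division algorithm to produce those polynomials concretely.
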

\begin{proof}
It suffices to show the case where $f$ and $g$ are monic.
We put
\[
	f(s,E_2,Q_1,Q_2) = E_2^d + \sum^{d-1}_{k=0}a_k(s,Q_1,Q_2)E_2^k,
\]
and 
\[
	g(s,E_2,Q_1,Q_2)=E_2+b(s,Q_1,Q_2),
\]
where $d=\deg(f; E_2)$, and $a_k(s,Q_1,Q_2),\ b(s,Q_1,Q_2) \in \mathbb{Q}(s,Q_1,Q_2)$.
We also put
\[
	\tilde{f}(s,E_2,Q_1,Q_2) = E_2^d + \sum^{d-1}_{k=0}A_k(s,Q_1,Q_2)E_2^k,
\]
and
\[
	\tilde{g}(s,E_2,Q_1,Q_2)=E_2+B(s,Q_1,Q_2),
\]
and find polynomials $A_k(s,Q_1,Q_2),\ B(s,Q_1,Q_2) \in \mathbb{Q}(s,Q_1,Q_2)$.
Since finding such polynomials is easy if $b(s,Q_1,Q_2)=0$, we assume that $b(s,Q_1,Q_2)\neq 0$.
Since $\tilde{g}f=\tilde{f}g$, we have
\begin{equation}\label{eq:A0}
	A_0(s,Q_1,Q_2)b(s,Q_1,Q_2)=B(s,Q_1,Q_2)a_0(s,Q_1,Q_2),
\end{equation}
\begin{align}
	\begin{split} \label{eq:Ak}
	A_{k-1}&(s,Q_1,Q_2)+A_k(s,Q_1,Q_2)b(s,Q_1,s^kQ_2)\\
	&=a_{k-1}(s,Q_1,Q_2)+a_k(s,Q_1,Q_2)B(s,Q_1,Q_2)
	\end{split}
\end{align}
for $k=1,\ldots,d-1$, and
\begin{equation} \label{eq:Ad}
	A_{d-1}(s,Q_1,Q_2)+b(s,Q_1,s^d Q_2) = a_{d-1}(s,Q_1,Q_2) + B(s,Q_1,Q_2).
\end{equation}
By the equalities \eqref{eq:A0} and \eqref{eq:Ak}, we can inductively show that
all $A_k(s,Q_1,Q_2)$, with $k=0,\ldots,d-1$, are of the form
\[
	A_k(s,Q_1,Q_2) = \xi _k(s,Q_1,Q_2) B(s,Q_1,Q_2) + \eta _k(s,Q_1,Q_2),
\]
where $\xi_k(s,Q_1,Q_2),\ \eta_k(s,Q_1,Q_2) \in \mathbb{Q}(s,Q_1,Q_2)$.
Substituting
\[
	A_{d-1}(s,Q_1,Q_2) = \xi _{d-1}(s,Q_1,Q_2) B(s,Q_1,Q_2) + \eta _{d-1}(s,Q_1,Q_2),
\]
for \eqref{eq:Ad}, we obtain $B(s,Q_1,Q_2) \in \mathbb{Q}(s,Q_1,Q_2)$ as a solution of a linear equation,
and obtain $A_k(s,Q_1,Q_2) \in \mathbb{Q}(s,Q_1,Q_2)$.
\end{proof}
\begin{prop} \label{thm:ApolydivW}
	There is a relationship between $\Af_{1}(W)$, $\Af_{2}(W)$, and $\hB_{12}(W)$ of the form
	\[
		\alpha(s,E_2,Q_1,Q_2)\Af_{1}(W) = \beta(s,E_1,E_2,Q_1,Q_2)\hB_{12}(W) + \gamma(s,Q_1,Q_2)\Af_{2}(W),
	\]
	where $\alpha(s,E_2,Q_1,Q_2),\beta(s,E_1,E_2,Q_1,Q_2)$ are non-commutative polynomials, and $ \gamma(s,Q_1,Q_2) \in \mathbb{Q}(s,Q_1,Q_2)$.
\end{prop}
\begin{proof}
Multiplying an element in $\mathbb{Q}(s,Q_1,Q_2)$, we normalize the annihilating polynomial with respect to $E_1$ as of the form
\[
	\Af'_{1}(W)(s,E_1,Q_1,Q_2)=E_1^5 + \sum^4_{k=0}d_k(s,Q_1,Q_2)E_1^k,
\]
where $d_k(s,Q_1,Q_2) \in \mathbb{Q}(s,Q_1,Q_2)$, with $k=0,\ldots,4$.
Using Lemma \ref{lem:ore}, we can decrease the degree of $\Af'_1(W)$ with respect to $E_2$ one by one.
We put
\[
	\hB_{12}(W)(s,E_1,E_2,Q_1,Q_2)=c_1(s,E_2,Q_1,Q_2)E_1+c_0(s,E_2,Q_1,Q_2).
\]
\begin{enumerate}
	\item Noting
	\[
		E_1^4c_1(s,E_2,Q_1,Q_2)E_1 = c_1(s,E_2,s^4Q_1,Q_2)E_1^5,
	\]
	we let
	\begin{equation} \label{eq:F1A}
		F_1(s,E_1,E_2,Q_1,Q_2)=c_1(s,E_2,s^4Q_1,Q_2)\Af'_{1}(W)-E_1^4 \hB_{12}(W).
	\end{equation}
	Then, $F_1$ equals
	\begin{align*}
		F_1&(s,E_1,E_2,Q_1,Q_2)= \delta _1(s,E_2,Q_1,Q_2)E_1^4  \\
		&+ \sum^3_{k=0}c_1(s,E_2,s^4Q_1,Q_2)d_k(s,Q_1,Q_2)E_1^k,
	\end{align*}
	where $\delta _1(s,E_2,Q_1,Q_2) = c_1(s,E_2,s^4Q_1,Q_2)d_4(s,Q_1,Q_2)-c_0(s,E_2,s^4Q_1,Q_2)$.
	\item By Lemma \ref{lem:ore}, there exists a polynomial
	$\tc_1(s,E_2,Q_1,Q_2)$ and $\tdel_1(s,E_2,Q_1,Q_2)$ such that
	\[
		\tc_1(s,E_2,Q_1,Q_2)\delta_1(s,E_2,Q_1,Q_2)=\tdel_1(s,E_2,Q_1,Q_2)c_1(s,E_2,s^3Q_1,Q_2).
	\]
	We let
	\begin{align}
	\begin{split}
		F_2&(s,E_1,E_2,Q_1,Q_2)\\
		&=\tc_1(s,E_2,Q_1,Q_2)F_1(s,E_1,E_2,Q_1,Q_2)-\tdel_1(s,E_2,Q_1,Q_2)E_1^3 \hB_{12}(W).
	\end{split}
	\end{align}
	Then,
	\begin{align*}
		F_2&(s,E_1,E_2,Q_1,Q_2)=\delta_2(s,E_2,Q_1,Q_2)E_1^3 \\
		&+\sum^2_{k=0}\tc_1(s,E_2,Q_1,Q_2)c_1(s,E_2,s^4Q_1,Q_2)d_k(s,Q_1,Q_2)E_1^k,
	\end{align*}
	where
	\begin{align*}
		\delta_2&(s,E_2,Q_1,Q_2)\\
		&=\tc_1(s,E_2,Q_1,Q_2)c_1(s,E_2,s^4Q_1,Q_2)d_3(s,Q_1,Q_2)\\
		&-\tdel_1(s,E_2,Q_1,Q_2)c_0(s,E_2,s^3Q_1,Q_2).
	\end{align*}
	\item By Lemma \ref{lem:ore}, there exists a polynomial
	$\tc_2(s,E_2,Q_1,Q_2)$ and $\tdel_2(s,E_2,Q_1,Q_2)$ such that
	\[
		\tc_2(s,E_2,Q_1,Q_2)\delta_2(s,E_2,Q_1,Q_2)=\tdel_2(s,E_2,Q_1,Q_2)c_1(s,E_2,s^2Q_1,Q_2).
	\]
	We let
	\begin{align}
	\begin{split}
		F_3&(s,E_1,E_2,Q_1,Q_2)\\
		&=\tc_2(s,E_2,Q_1,Q_2)F_2(s,E_1,E_2,Q_1,Q_2)-\tdel_2(s,E_2,Q_1,Q_2)E_1^2 \hB_{12}(W).
	\end{split}
	\end{align}
	Then,
	\begin{align*}
		F_3&(s,E_1,E_2,Q_1,Q_2)=\delta_3(s,E_2,Q_1,Q_2)E_1^2 \\
		&+\sum^1_{k=0}(\tc_2 \tc_1)(s,E_2,Q_1,Q_2)c_1(s,E_2,s^4Q_1,Q_2)d_k(s,Q_1,Q_2)E_1^k,
	\end{align*}
	where
	\begin{align*}
		\delta_3&(s,E_2,Q_1,Q_2)\\
		&=(\tc_2 \tc_1)(s,E_2,Q_1,Q_2)c_1(s,E_2,s^4Q_1,Q_2)d_2(s,Q_1,Q_2)\\
		&-\tdel_2(s,E_2,Q_1,Q_2)c_0(s,E_2,s^2Q_1,Q_2).
	\end{align*}
	\item By Lemma \ref{lem:ore}, there exists a polynomial
	$\tc_3(s,E_2,Q_1,Q_2)$ and $\tdel_3(s,E_2,Q_1,Q_2)$ such that
	\[
		\tc_3(s,E_2,Q_1,Q_2)\delta_3(s,E_2,Q_1,Q_2)=\tdel_3(s,E_2,Q_1,Q_2)c_1(s,E_2,sQ_1,Q_2).
	\]
	We let
	\begin{align}
	\begin{split}
		F_4&(s,E_1,E_2,Q_1,Q_2)\\
		&=\tc_3(s,E_2,Q_1,Q_2)F_3(s,E_1,E_2,Q_1,Q_2)-\tdel_3(s,E_2,Q_1,Q_2)E_1 \hB_{12}(W).
	\end{split}
	\end{align}
	Then,
	\begin{align*}%%%
		F_4&(s,E_1,E_2,Q_1,Q_2)=\delta_4(s,E_2,Q_1,Q_2)E_1 \\
		&+(\tc_3 \tc_2 \tc_1)(s,E_2,Q_1,Q_2)c_1(s,E_2,s^4Q_1,Q_2)d_0(s,Q_1,Q_2),
	\end{align*}
	where
	\begin{align*}
		\delta_4&(s,E_2,Q_1,Q_2)\\
		&=(\tc_3 \tc_2 \tc_1)(s,E_2,Q_1,Q_2)c_1(s,E_2,s^4Q_1,Q_2)d_1(s,Q_1,Q_2)\\
		&-\tdel_3(s,E_2,Q_1,Q_2)c_0(s,E_2,sQ_1,Q_2).
	\end{align*}
	\item By Lemma \ref{lem:ore}, there exists a polynomial
	$\tc_4(s,E_2,Q_1,Q_2)$ and $\tdel_4(s,E_2,Q_1,Q_2)$ such that
	\[
		\tc_4(s,E_2,Q_1,Q_2)\delta_4(s,E_2,Q_1,Q_2)=\tdel_4(s,E_2,Q_1,Q_2)c_1(s,E_2,Q_1,Q_2).
	\]
	We let
	\begin{align}
	\begin{split} \label{eq:del5A}
		\delta_5&(s,E_2,Q_1,Q_2)\\
		&=\tc_4(s,E_2,Q_1,Q_2)F_4(s,E_1,E_2,Q_1,Q_2)-\tdel_4(s,E_2,Q_1,Q_2)\hB_{12}(W).
	\end{split}
	\end{align}
	Then,
	\begin{align*}
		\delta_5&(s,E_2,Q_1,Q_2)\\
		&=(\tc_4 \tc_3 \tc_2 \tc_1)(s,E_2,Q_1,Q_2)c_1(s,E_2,s^4Q_1,Q_2)d_0(s,Q_1,Q_2)\\
		&-\tdel_4(s,E_2,Q_1,Q_2)c_0(s,E_1,Q_1,Q_2).
	\end{align*}
\end{enumerate}
Combining \eqref{eq:F1A}-\eqref{eq:del5A}, we have
\begin{align*}
	(\tc_4 \tc_3 \tc_2 \tc_1)&(s,E_2,Q_1,Q_2)c_1(s,E_2,s^4Q_1,Q_2)\Af'_1(W)\\
	&=\{(\tc_4 \tc_3 \tc_2 \tc_1)(s,E_2,Q_1,Q_2)E_1^4 + (\tc_4 \tc_3 \tc_2 \tdel_1)(s,E_2,Q_1,Q_2)E_1^3 \\
	&+(\tc_4 \tc_3 \tdel_2)(s,E_2,Q_1,Q_2)E_1^2+(\tc_4 \tdel_3)(s,E_2,Q_1,Q_2)E_1\\
	&+\tdel_4(s,E_2,Q_1,Q_2)\}\hB_{12}(W)+\delta _5(s,E_2,Q_1,Q_2).
\end{align*}
Here, for $r$ noncommutative polynomials $p_j(s,E_2,Q_1,Q_2)$, with $j=1,\ldots,r$, we put
\[
	(p_1 \cdots p_r)(s,E_2,Q_1,Q_2) = p_1(s,E_2,Q_1,Q_2) \cdots p_r(s,E_2,Q_1,Q_2).
\]
$\delta_5$ has a degree $5$ with respect to $E_2$ and annihilates $V_W(m,n)$.
Therefore, $\delta_5$ equals $\Af_{2}(W)$ up to multiplication by an element in $\mathbb{Q}(s,Q_1,Q_2)$.
\end{proof}
\begin{rem}
Computational calculation shows that $\deg(\varepsilon_s \delta_k;E_2)=k$, with $k=1,\ldots,5$.
Specifically, $ \varepsilon _s \delta_5$ equals $(E_2+1)(E_2-Q_2^2)A_2(W)(E_2,Q_1,Q_2)$, where
\begin{align*}
	&A_2(W)(E_2,Q_1,Q_2)\\
	&\quad=Q_2^4Q_1^2E_2^3+(Q_2^4Q_1^4-Q_2^2Q_1^4+Q_2^4-2Q_2^2Q_1^2-Q_2^2+Q_1^2)E_2^2\\
	&\quad \quad+(Q_2^4Q_1^2-Q_2^2Q_1^4-2Q_2^2Q_1^2+Q_1^4-Q_2^2+1)E_2+Q_1^2.
\end{align*}
up to multiplication by an element in $\mathbb{Q}(Q_1,Q_2)$.
\end{rem}
More generally, when dividing any polynomial in the ideal $\hI(W)$ by $\hB_{12}(W)$ as polynomials in $E_1$,
the remainder is the polynomial in $s,\ E_2,\ Q_1$ and $Q_2$ since $\deg(\hB_{12}(W);E_1)=1$.
We also recall that clearing the denominators of the polynomial $\Af_1(W)$ yields the $\hA$-polynomial $\hA_1(W)$.
The above discussion implies the following theorem: %that we can choose $\hB_{12}(W)$ and one of $\Af_1(W)$ and $\Af_2(W)$ as generators of the ideal $\hI(W)$.
\begin{thm} \label{thm:genhIW}
	$\hI(W) = \langle \hA_{1}(W),\ \hB_{12}(W)\rangle $ holds.
\end{thm}

\subsubsection{Evaluation at $s=1$}
%In this subsection, we treat all variables, such as $E_1,\ E_2,\ Q_1,$ and $Q_2$, as commutative.
The evaluation of $\Af_{1}(W)(s,E_1,Q_1,Q_2)$ at $s=1$ is
\begin{equation}
\begin{split} \label{eq:whlev}
	\varepsilon _s \Af_{1}&(W)(s,E_1,Q_1,Q_2)\\
	&=-\frac{1}{(1+Q_1^2)^2Q_1^2Q_2^2}(E_1+1)(E_1-Q_1^2)A_1(W)(E_1,Q_1^2,Q_2^2).
\end{split}
\end{equation}
On the other hand, evaluating $\hB_{12}(W)(s,E_1,E_2,Q_1,Q_2)$ at $s=1$, we have
\begin{equation} \label{eq:esAbi}
\begin{split}
	\hB_{12}(W)&(1,E_1,E_2,Q_1,Q_2)\\
	&=(Q_1^2-Q_2^2)E_1E_2+(1-Q_1^2Q_2^2)(E_1-E_2)+(Q_2^2-Q_1^2)\\
	&=B_{12}(W)(E_1,E_2,Q_1^2,Q_2^2).
\end{split}
\end{equation}
The $B$-polynomial $B_{12}(W)$ has
\[
	(E_1,E_2)=\pm(1,1),\ (Q_1^{2},Q_2^{2}),\ (Q_1^{-2},Q_2^{-2}),\ (-Q_2^{2},-Q_1^{2}),\ (-Q_2^{-2},-Q_1^{-2})
\]
as its roots. % On J_W(m,n)
Evaluating the $\hA$-ideal $\hI(W)$ at $s=1$, we have $\varepsilon_s \hI(W) \subset I_W^\Phi$
from the equations \eqref{eq:whlev} and \eqref{eq:esAbi}.
This inclusion relation arises from the relationship between the partial derivatives of the potential function $\Phi $
and the differences of the summand $F$.
Recall that the polynomials $p_0(l_1,a,z)$ and $p_1(a,b,z)$ are defined by
\begin{gather}
	p_0(l_1,a,z) = \alpha z -1 - l_1(z-a)\\
	p_1(a,b,z) = (a z -1)(b z -1)(z -a)(z -b)-ab z(z^2-1)(z+1).
\end{gather}
in \ref{ssec:linkscom}.
Substituting $s=1$ and $\tE_1=1$ into the equations \eqref{eq:whlQ2} and \eqref{eq:whlE1} yields
\[
	p_0(E_1,Q_1^2,\tQ_1^2) = 0,\quad \text{and}\quad p_1(Q_1^2,Q_2^2,\tQ_1^2) = 0.
\]
Similarly, the equation
\[
	\varepsilon \left.\frac{E_2 F}{F}\right|_{s^n=Q_2,s^i=\tQ_1}=E_2
\]
is equivalent to $p_0(E_2,Q_2^2,\tQ_1^2) = 0$.
Note that a relationship between differences and partial derivatives holds for general hyperbolic links.
Expecting the factors $E_1+1$ and $E_1-Q_1^2$ in \eqref{eq:whlev} correspond to some $SL(2,\mathbb{C})$-representations
of the link group of the Whitehead link, we can formulate the following conjecture:
\begin{conj} \label{conj:linkAJ}
	For any link $\cL$, $ \varepsilon _s \hI(\cL) = I(\cL) $ holds.
\end{conj}
\appendix
\section{Another relationship between $\Af_{1}(W)$, $\Af_{2}(W)$ and $\hB_{12}(W)$} %Supplement to Remark \ref{rem:AWdiv}
In this section we find another relationship between $\Af_{1}(W)$, $\Af_{2}(W)$ and $\hB_{12}(W)$.
\begin{lem} \label{lem:AWalt2}
Eliminating $\tQ_1^4$ from \eqref{eq:whlQ4} and
\[%begin{align*}
%\begin{split} \label{}
	u_1(s,E_2,Q_2)(E_2-s^2Q_2^2)\tQ_1^4Q_2=v_1(s,E_2,Q_2)(Q_2^2E_2-1)Q_2,
%\end{split}
\]%end{align*}
we have
\begin{align*}
	\hB^2_{12}(W)&(s,E_1,E_2,Q_1,Q_2)\\
	&=\{u_1(s,E_2,Q_2)(E_2-s^2Q_2^2)v_1(s,E_1,Q_1)(Q_1^2E_1-1)\\
	&\quad-u_1(s,E_1,Q_1)(E_1-s^2Q_1^2)v_1(s,E_2,Q_2)(Q_2^2E_2-1)\}Q_1Q_2.
\end{align*}
Then, $ \hB'_{12}(W)(s,E_1,E_2,Q_1,Q_2)$ is a right divisor of $\hB^2_{12}(W)(s,E_1,E_2,Q_1,Q_2)$.
\end{lem}
\begin{proof}
Recalling the factorization \eqref{eq:sfact},
we can verify
\begin{align*}
	\hB^2_{12}(W)&(s,E_1,E_2,Q_1,Q_2)\\
	&=\{u_1(s,E_2,Q_2)v_1(s,E_1,Q_1)+u_1(s,E_1,Q_1)v_1(s,E_2,Q_2)\}\\
	& \quad \times \hB'_{12}(W)(s,E_1,E_2,Q_1,Q_2)
\end{align*}
by a direct calculation.
\end{proof}
\begin{prop} \label{prop:AnnWrel}
$\hB_{12}(W)(s,E_1,E_2,Q_1,Q_2)$ is a right divisor of
\begin{align*}
	P^1_W&(s,sE_2,Q_2)Y(s,sE_2,Q_2)\Af_{1}(W)(s,E_1,Q_1,Q_2)\\
	&-P^1_W(s,sE_1,Q_1)Y(s,sE_1,Q_1)\Af_{2}(W)(s,E_2,Q_1,Q_2).
\end{align*}
See \ref{sssec:A1WL} to recall the definitions of $P^1_W(s,E_1,Q_1)$ and $Y(s,E_1,Q_1)$.
\end{prop}
\begin{proof}
It suffices to show that $\hB'_{12}(W)(s,E_1,E_2,Q_1,Q_2)$ is a right divisor of
\begin{equation} \label{eq:YP0}
	Y(s,E_2,Q_2)\tilde{P}^0_W(s,E_1,Q_1,Q_2)Q_2-Y(s,E_1,Q_1)\tilde{P}^0_W(s,E_2,Q_2,Q_1)Q_1.
\end{equation} %where $P^0$ is the polynomial in \eqref{}.
Note that $\tilde{P}^0_W(s,E_1,Q_1,Q_2)$ is obtained by eliminating $\tQ_1$ from \eqref{eq:whlQ2} and
\begin{align*}
	&(s^2-s^4)\tQ_1^4+s^2(Q_1^2+Q_2^2+Q_1^{-2}+Q_2^{-2}+1)\tQ_1^2 \\
	&\quad -(Q_1^{2}Q_2^{2}+Q_1^{2}Q_2^{-2}+3+Q_1^{-2}Q_2^{2}+Q_1^{-2}Q_2^{-2})\\
	&\quad \quad +s^{-2}(Q_1^2+Q_2^2+Q_1^{-2}+Q_2^{-2}-s^{-2})\tQ_1^{-2} -s^{-4}\tQ_1^{-4}=0.
\end{align*}
The term of $\tilde{P}^0_W(s,E_1,Q_1,Q_2)Q_2$ derived from $\tQ_1^4$ is
\begin{align*}
	Y&(s,E_1,Q_1)\tQ_1^4 Q_1 Q_2\\
	&= u_2(s,E_1,Q_1)v_1(s,E_1,Q_1)(Q_1^2E_1-1)Q_1 Q_2.
\end{align*}
We put
\begin{align*}
	f&(s,E_1,E_2,Q_1,Q_2)\\
	&=Y(s,E_2,Q_2)u_2(s,E_1,Q_1)v_1(s,E_1,Q_1)(Q_1^2E_1-1)Q_1 Q_2.
\end{align*}
Then,
\begin{align*}
	&f(s,E_1,E_2,Q_1,Q_2)-f(s,E_2,E_1,Q_2,Q_1)\\
	&\quad=u_2(s,E_1,Q_1)u_2(s,E_2,Q_2)\hB^2_{12}(W)(s,E_1,E_2,Q_1,Q_2).
\end{align*}
By Lemma \ref{lem:AWalt2}, $\hB'_{12}(W)(s,E_1,E_2,Q_1,Q_2)$ is a right divisor of this term.
The term $g_{1}(s,E_1,Q_1,Q_2)$ of $\tilde{P}^0_W(s,E_1,Q_1,Q_2)Q_2$ derived from $(Q_1^2+Q_1^{-2})\tQ_1^2$ is
\begin{align*}
	g_{1}&(s,E_1,Q_1,Q_2)\\
	&=Y(s,E_1,Q_1)(Q_1^2+Q_1^{-2})\tQ_1^2 Q_1 Q_2\\
	&\quad= u_2(s,E_1,Q_1)h(s,E_1,Q_1)(Q_1^2E_1-1)Q_1 Q_2,
\end{align*}
where
\[
	h(s,E_1,Q_1)=u_1(s,E_1,Q_1)(s^2Q_1^2+s^{-2}Q_1^{-2})+1-s^{2}.
\]
On the other hand, The term $g_{2}(s,E_2,Q_1,Q_2)$ of $\tilde{P}^0_W(s,E_2,Q_2,Q_1)Q_1$ derived from $(Q_1^2+Q_1^{-2})Q_1^2$ is
\begin{align*}
	g_{2}&(s,E_2,Q_1,Q_2)\\
	&=Y(s,E_2,Q_2)(Q_1^2+Q_1^{-2})\tQ_1^2 Q_1 Q_2\\
	&\quad= u_2(s,E_2,Q_2)u_1(s,E_2,Q_2)(Q_1^2+Q_1^{-2})(Q_2^2E_2-1) Q_1 Q_2.
\end{align*}
Then,
\begin{align*}
	Y&(s,E_1,Q_1) g_{2}(s,E_2,Q_1,Q_2)\\
	&=u_2(s,E_1,Q_1)u_2(s,E_2,Q_2)u_1(s,E_2,Q_2)h(s,E_1,Q_1)\\
	&\quad \times(E_1-s^2Q_1^2)(Q_2^2E_2-1)Q_1 Q_2.
\end{align*}
Therefore,
\begin{align*}
	Y&(s,E_2,Q_2) g_{1}(s,E_1,Q_1,Q_2)-Y(s,E_1,Q_1) g_{2}(s,E_2,Q_1,Q_2)\\
	&=u_2(s,E_1,Q_1)u_2(s,E_2,Q_2)u_1(s,E_2,Q_2)h(s,E_1,Q_1)\\
	& \quad \times \hB'_{12}(W)(s,E_1,E_2,Q_1,Q_2).
\end{align*}
We can perform a similar calculation for the terms derived from $(Q_2^2+Q_2^{-2})\tQ_1^2$ and $\tQ_1^2$.
Recalling the factorization
\begin{align*}
	&u_2(s,E_1,Q_1)u_1(s,E_1,Q_1)(E_1-s^2Q_1^2)\\
	&\quad=v_2(s,E_1,Q_1)v_1(s,E_1,Q_1)(Q_1^2E_1-1),
\end{align*}
we can similarly varify that $\hB'_{12}(W)(s,E_1,E_2,Q_1,Q_2)$ is a right divisor of the terms of \eqref{eq:YP0}
derived from $\tQ_1^{-4}$ and $(Q_1^{2}+Q_2^{2}+Q_1^{-2}+Q_2^{-2}-s^2)\tQ_1^{-2}$.
Since $Y(s,E_1,Q_1)$ and $Y(s,E_2,Q_2)$ are commutative,
the term of \eqref{eq:YP0} derived from
\[
	Q_1^{2}Q_2^{2}+Q_1^{2}Q_2^{-2}+3+Q_1^{-2}Q_2^{2}+Q_1^{-2}Q_2^{-2}
\]
equals $0$.
\end{proof}

\end{document}